\newtheorem{theorem}{Theorem}[section]
\newtheorem{lemma}[theorem]{Lemma}
\newtheorem{corollary}[theorem]{Corollary}
\theoremstyle{definition}
\theoremstyle{remark}
\numberwithin{equation}{section}
\begin{document}

\title[Series involving central binomial coefficients]{Further classes of 
series involving central binomial coefficients}

\author{Karl Dilcher}
\address{Department of Mathematics and Statistics\\
 Dalhousie University\\
         Halifax, Nova Scotia, B3H 4R2, Canada}
\email{dilcher@mathstat.dal.ca}

\author{Christophe Vignat}
\address{CentraleSup\'elec, Universit\'e Paris-Saclay, Gif-sur-Yvette, France and Department of
Mathematics, Tulane University, New Orleans, LA 70118, USA}
\email{cvignat@tulane.edu}
\keywords{Series, central binomial coefficient, gamma function, polygamma
functions, harmonic numbers, Bell polynomials, zeta function}
\subjclass[2010]{Primary: 33E20 ; Secondary: 05A10, 33B15 }
\thanks{The first author was supported in part by the Natural Sciences and 
Engineering Research Council of Canada}

\setcounter{equation}{0}

\begin{abstract}
Departing from a class of infinite series with central binomial coefficients in
the numerator and depending on a positive integer parameter, we first extend
known identities to all complex parameters. Then we use various methods,
including exponential Bell polynomials and integral representations, to further
extend these results. Throughout the paper, we make extensive use of the gamma 
and polygamma functions and their properties.
\end{abstract}

\maketitle

\section{Introduction}\label{sec:1}

Infinite series involving central binomial coefficients have been studied for
a long time and continue to be of great interest. A particularly interesting 
and informative paper on this subject was published by D.~H.~Lehmer \cite{Le}
in 1985, which contains a number of methods for obtaining such series, along
with numerous examples. 

We begin here with the pair of series
\begin{equation}\label{1.1}
\sum_{k=0}^\infty\frac{\binom{2k}{k}}{4^k(2k+1)}=\frac{\pi}{2},\qquad
\sum_{k=0}^\infty\frac{\binom{2k}{k}}{4^k(2k+2)}=1.
\end{equation}
Both identities can be obtained from the power series
\begin{equation}\label{1.1a}
\sum_{k=0}^\infty\binom{2k}{k}x^k = \frac{1}{\sqrt{1-4x}},
\end{equation}
which is a special case of the binomial theorem. For the first identity in
\eqref{1.1} we replace $x$ by $x^2/4$ in \eqref{1.1a} and integrate; this 
gives a well-known series for the arcsine function which we evaluate at $x=1$.
Similarly, the second identity in \eqref{1.1} follows from integrating
\eqref{1.1a} as is. For details, see \cite[p.~449--451]{Le}.

Our recent paper \cite{DV} contains a wider study of power series and
numerical series including central binomial coefficients, which arise from 
integration methods applied to the arcsine function and some of its powers. 
As special cases we obtained the following classes of series:

For all integers $\ell\geq 0$ we have
\begin{align}
\sum_{k=0}^\infty\frac{\binom{2k}{k}}{4^k(2k+2\ell+1)}
&=\binom{2\ell}{\ell}\frac{\pi}{2^{2\ell+1}},\label{1.2} \\
\sum_{k=0}^\infty\frac{\binom{2k}{k}}{4^k(2k+2\ell+2)}
&= \frac{2^{2\ell}}{(2\ell+1)\binom{2\ell}{\ell}}.\label{1.3}
\end{align}
These clearly generalize the two identities in \eqref{1.1}. Next, we shall see
that the two identities \eqref{1.2} and \eqref{1.3} can be combined if we use
the gamma function instead of binomial coefficients. A proof will be given in
Section~\ref{sec:2}.

\begin{theorem}\label{thm:1.1}
For all integers $n\geq 0$ we have
\begin{equation}\label{1.4}
\sum_{k=0}^\infty\frac{\binom{2k}{k}}{4^k(2k+1+n)}
=\frac{\sqrt{\pi}}{2}\cdot\frac{\Gamma(\frac{n+1}{2})}{\Gamma(\frac{n+2}{2})}.
\end{equation}
\end{theorem}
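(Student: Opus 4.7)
The plan is to express the summand as an integral and then recognize the resulting integral as a classical one. The starting observation is that
\[
\frac{1}{2k+1+n} = \int_0^1 t^{2k+n}\,dt,
\]
so I would first substitute this into the left-hand side of \eqref{1.4} and swap the order of summation and integration. Since every term is nonnegative, Tonelli's theorem (or just monotone convergence) immediately justifies the swap, so this step costs nothing.

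After the swap, the inner sum is $\sum_{k\ge 0}\binom{2k}{k}(t^2/4)^k$, which by the binomial expansion \eqref{1.1a} equals $1/\sqrt{1-t^2}$. Thus the left-hand side reduces to
\[
\int_0^1 \frac{t^n}{\sqrt{1-t^2}}\,dt.
\]
The substitution $t=\sin\theta$ converts this to the Wallis-type integral $\int_0^{\pi/2}\sin^n\theta\,d\theta$, whose standard evaluation in terms of the beta function is precisely $\tfrac{\sqrt{\pi}}{2}\cdot\Gamma(\tfrac{n+1}{2})/\Gamma(\tfrac{n+2}{2})$, matching the right-hand side of \eqref{1.4}.

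The only delicate issue is the convergence of the series at $t$ close to $1$: the generating function $1/\sqrt{1-t^2}$ has an integrable singularity at $t=1$, so the resulting improper integral still converges for all $n\ge 0$, and the interchange above is valid on $[0,1-\varepsilon]$ for every $\varepsilon>0$ with a uniform nonnegative bound, letting $\varepsilon\to 0$ by monotone convergence. I expect this to be the only potential obstacle, and it is a mild one.

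As a sanity check, one can verify consistency with the known identities \eqref{1.2} and \eqref{1.3}: writing $n=2\ell$ and $n=2\ell+1$, and using the duplication formula $\Gamma(\ell+\tfrac12)=\sqrt{\pi}\,(2\ell)!/(4^\ell\ell!)$, the right-hand side of \eqref{1.4} collapses to $\binom{2\ell}{\ell}\pi/2^{2\ell+1}$ and to $2^{2\ell}/((2\ell+1)\binom{2\ell}{\ell})$ respectively, recovering \eqref{1.2} and \eqref{1.3}. This suggests that \eqref{1.4} is in fact the natural unification of those two identities through the gamma function, as announced in the paragraph preceding the theorem.
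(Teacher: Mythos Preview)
Your proof is correct, but it takes a different route from the paper's own proof of Theorem~\ref{thm:1.1}. The paper does not prove \eqref{1.4} directly; instead it treats \eqref{1.2} and \eqref{1.3} as already established (they are quoted from the authors' earlier work), and simply checks via Legendre's duplication formula \eqref{2.1} that the gamma-function expression on the right of \eqref{1.4} collapses to the right-hand sides of \eqref{1.2} and \eqref{1.3} when $n=2\ell$ and $n=2\ell+1$ respectively. In other words, the paper's proof is exactly your ``sanity check'' paragraph, run in reverse and used as the proof itself.

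Your approach is more self-contained, since it does not depend on \eqref{1.2} and \eqref{1.3} being known in advance. It is worth noting that your integral argument is essentially the same one the paper uses later, in the proof of the more general Theorem~\ref{thm:2.1} (see \eqref{2.3}--\eqref{2.4}), where the parameter is extended from integers $n\ge 0$ to all $z\in\mathbb{C}\setminus\{-1,-3,-5,\ldots\}$. There the paper uses the substitution $w=t^2$ and the beta integral directly rather than passing through the Wallis integral, but the content is the same. So what you have written is, in effect, the integer case of the paper's proof of Theorem~\ref{thm:2.1}, while the paper's proof of Theorem~\ref{thm:1.1} is a short deduction from prior results that you recover only as a consistency check.
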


The first goal of this paper is to extend \eqref{1.4} to almost all complex
parameters. This will be done in Section~\ref{sec:2}, along with some special
cases. For instance, in Corollary~\ref{cor:2.2} we obtain the surprising
infinite class of identities
\[
\sum_{k=0}^\infty\frac{\binom{2k}{k}}{4^k(2k+1-2m)}=0,\quad m=1, 2, 3,\ldots.
\]
The cases not covered by the extension of Theorem~\ref{thm:1.1} are treated
separately in Section~\ref{sec:3}. We then use differentiation to obtain
further identities in Section~\ref{sec:4} that have squares and cubes of the 
linear factors in the denominator. In Section~\ref{sec:5} we use
exponential Bell polynomials to extend the previous results to arbitrary
positive integer powers of the linear factor in the denominator. Some integral 
representations are then used in Section~\ref{sec:6} for an alternative
approach to some of our results, and we conclude this paper with a few further
remarks and identities in Section~\ref{sec:7}.

\section{Extending the identity \eqref{1.4}}\label{sec:2}

We begin with a proof of Theorem~\ref{thm:1.1}.

\begin{proof}[Proof of Theorem~\ref{thm:1.1}]
Our main tool for dealing with the gamma function is Legendre's duplication
formula
\begin{equation}\label{2.1}
\Gamma(2z) = \pi^{-1/2} 2^{2z-1}\Gamma(z)\Gamma(z+\tfrac{1}{2}),
\end{equation}
valid for $2z\neq 0, -1, -2,\ldots$; see, e.g., \cite[eq.~5.5.5]{DLMF}.
First, for \eqref{1.2} and \eqref{1.4} to agree, we require
\[
\frac{\Gamma(2\ell+1)}{\Gamma(\ell+1)^2}\cdot\frac{\pi}{2^{2\ell+1}}
=\frac{\sqrt{\pi}}{2}\cdot\frac{\Gamma(\ell+\tfrac{1}{2})}{\Gamma(\ell+1)}.
\]
But this is easily seen to be equivalent to \eqref{2.1} with
$z=\ell+\tfrac{1}{2}$. Second, for \eqref{1.3} and \eqref{1.4} to agree, we
similarly require
\[
\frac{2^{2\ell}\Gamma(\ell+1)^2}{(2\ell+1)\Gamma(2\ell+1)}
=\frac{\sqrt{\pi}}{2}\cdot\frac{\Gamma(\ell+1)}{\Gamma(\ell+\tfrac{3}{2})}.
\]
This is seen to be equivalent to \eqref{2.1} with $z=\ell+1$ if we also note
that $(2\ell+1)\Gamma(2\ell+1)=\Gamma(2\ell+2)$.
\end{proof}

We are now ready to state and prove the main result of this section.

\begin{theorem}\label{thm:2.1}
For any $z\in{\mathbb C}\setminus\{-1,-3,-5,\ldots\}$ we have
\begin{equation}\label{2.2}
\sum_{k=0}^\infty\frac{\binom{2k}{k}}{4^k(2k+1+z)}
=\frac{\sqrt{\pi}}{2}\cdot\frac{\Gamma(\frac{z+1}{2})}{\Gamma(\frac{z+2}{2})}.
\end{equation}
\end{theorem}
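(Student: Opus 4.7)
The plan is to prove the identity first on the half-plane $\operatorname{Re}(z)>-1$ by an integral representation, and then extend to the full domain by analytic continuation.

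For the first step, I would use the elementary identity
\[
\frac{1}{2k+1+z}=\int_0^1 t^{2k+z}\,dt,\qquad \operatorname{Re}(z)>-1,
\]
to rewrite each term of the series. To justify interchanging the sum and the integral, observe that
\[
\sum_{k=0}^\infty\frac{\binom{2k}{k}}{4^k}\int_0^1 t^{2k+\operatorname{Re}(z)}\,dt
=\sum_{k=0}^\infty\frac{\binom{2k}{k}}{4^k(2k+1+\operatorname{Re}(z))}<\infty
\]
for $\operatorname{Re}(z)>-1$ (by the $k^{-3/2}$ decay of $\binom{2k}{k}/(4^k k)$ from Stirling), so Fubini applies. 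Invoking the generating function \eqref{1.1a} with $x=t^2/4$ then gives
\[
\sum_{k=0}^\infty\frac{\binom{2k}{k}}{4^k(2k+1+z)}
=\int_0^1 \frac{t^z}{\sqrt{1-t^2}}\,dt.
\]
The substitution $u=t^2$ turns this into $\tfrac12 B\!\left(\tfrac{z+1}{2},\tfrac12\right)$, which equals the right-hand side of \eqref{2.2} upon using $\Gamma(\tfrac12)=\sqrt\pi$.

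For the second step, I would extend to $z\in\mathbb{C}\setminus\{-1,-3,-5,\ldots\}$ by analytic continuation. The series on the left converges uniformly on compact subsets avoiding $\{-1,-3,-5,\ldots\}$ (by the same $k^{-3/2}$ estimate), hence defines a meromorphic function on $\mathbb{C}$ with simple poles exactly at these odd negative integers. On the right, $\Gamma(\tfrac{z+1}{2})$ has simple poles at precisely the same points, while $\Gamma(\tfrac{z+2}{2})$ is analytic and nonvanishing there; so the right-hand side is also meromorphic on $\mathbb{C}$ with the same pole set. Since the two meromorphic functions agree on the open half-plane $\operatorname{Re}(z)>-1$, they agree throughout $\mathbb{C}\setminus\{-1,-3,-5,\ldots\}$ by the identity theorem.

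The main technical obstacle is the sum–integral interchange, because the inner generating function $1/\sqrt{1-t^2}$ blows up at $t=1$ and the termwise series $\sum \binom{2k}{k}/4^k$ diverges there. The interchange survives only because the singularity is weak enough that $\int_0^1 t^{\operatorname{Re}(z)}(1-t^2)^{-1/2}\,dt$ is finite for $\operatorname{Re}(z)>-1$; this integrability threshold is exactly what limits the direct method to the half-plane and forces us to use analytic continuation for the remaining values of $z$. A secondary point to record is that at $z=-2,-4,-6,\dots$ the right-hand side vanishes (since only the denominator $\Gamma(\tfrac{z+2}{2})$ has a pole), which is consistent with the corollary on vanishing sums anticipated in the introduction.
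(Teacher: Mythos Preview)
Your proof is correct and follows essentially the same route as the paper: represent $1/(2k+1+z)$ as $\int_0^1 t^{2k+z}\,dt$, sum under the integral using \eqref{1.1a}, evaluate via the Beta integral after $u=t^2$, and then invoke the identity theorem to extend. If anything, you are more careful than the paper: you correctly restrict the integral step to $\operatorname{Re}(z)>-1$ (where $\int_0^1 t^{z}\,dt$ actually converges) and justify the interchange via Fubini, whereas the paper loosely speaks of ``real $z$ not an odd negative integer'' before passing to analytic continuation.
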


\begin{proof}
We first assume that $z$ is real and not an odd negative integer. Then, using
the integral 
\[
\int_0^1 t^{2k+z}dt = \frac{1}{2k+1+z}
\]
and the series \eqref{1.1a}, we get
\begin{equation}\label{2.3}
\sum_{k=0}^\infty\frac{\binom{2k}{k}}{4^k(2k+1+z)}
=\int_0^1\sum_{k=0}^\infty\frac{\binom{2k}{k}}{4^k}t^{2k+z}dt 
=\int_0^1\frac{t^z}{\sqrt{1-t^2}}dt,
\end{equation}
where the change in the order of integration and summation can be justified.
Next, with the change of variables $w=t^2$ and using Euler's beta integral
\begin{equation}\label{2.3a}
B(a,b):=\int_0^1 w^{a-1}(1-w)^{b-1}dw = \frac{\Gamma(a)\Gamma(b)}{\Gamma(a+b)}
\end{equation}
(see, e.g., \cite[eq.~5.12.1]{DLMF}), we get
\begin{equation}\label{2.4}
\int_0^1\frac{t^z}{\sqrt{1-t^2}}dt
=\frac{1}{2}\int_0^1 w^{\frac{z}{2}-\frac{1}{2}}(1-w)^{-\frac{1}{2}}dw
=\frac{\Gamma(\frac{z}{2}-\frac{1}{2})\Gamma(\frac{1}{2})}{2\Gamma(\frac{1}{2}+1)}.
\end{equation}
Since $\Gamma(\frac{1}{2})=\sqrt{\pi}$, \eqref{2.4} together with \eqref{2.3}
prove \eqref{2.2} for real $z$. Finally, by the well-known asymptotics for 
the central binomial coefficients we have
\begin{equation}\label{2.5}
\frac{4^k}{\sqrt{\pi k}}\cdot\frac{7}{8}
\leq\frac{4^k}{\sqrt{\pi k}}\bigl(1-\frac{1}{8k}\bigr) < \binom{2k}{k}
< \frac{4^k}{\sqrt{\pi k}}\quad (k\geq 1),
\end{equation}
and by properties of the gamma function, both sides of \eqref{2.2} are 
analytic in $z\in{\mathbb C}\setminus\{-1,-3,-5,\ldots\}$. By the identity 
theorem, they are therefore identical in this region, which proves the theorem.
\end{proof}

In addition to \eqref{1.2} and \eqref{1.3}, which we recover via \eqref{1.4}, 
we obtain the following special cases from Theorem~\ref{thm:2.1}.

\begin{corollary}\label{cor:2.2}
\begin{align}
\sum_{k=0}^\infty\frac{\binom{2k}{k}}{4^k(2k+1-2m)} &=0,\quad 
m=1,2,3,\ldots,\label{2.6}\\
\sum_{k=0}^\infty\frac{\binom{2k}{k}}{4^k(2k+\frac{1}{2})} 
&=\frac{\Gamma(\tfrac{1}{4})^2}{2\sqrt{2\pi}},\label{2.7}\\
\sum_{k=0}^\infty\frac{\binom{2k}{k}}{4^k(2k-\frac{1}{2})} 
&=-\frac{(2\pi)^{3/2}}{\Gamma(\tfrac{1}{4})^2}.\label{2.8}
\end{align}
\end{corollary}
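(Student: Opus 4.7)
The plan is to derive all three identities as direct specializations of Theorem~\ref{thm:2.1}, choosing $z$ so that $2k+1+z$ matches the stated denominators, and then simplifying the resulting ratio of gamma values with standard tools.

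For \eqref{2.6} I would set $z=-2m$ (which is a negative \emph{even} integer and therefore lies in ${\mathbb C}\setminus\{-1,-3,-5,\ldots\}$, so Theorem~\ref{thm:2.1} applies). The right-hand side of \eqref{2.2} becomes
\[
\frac{\sqrt{\pi}}{2}\cdot\frac{\Gamma(\frac{1-2m}{2})}{\Gamma(1-m)}.
\]
The key observation is that $1-m$ is a non-positive integer for $m\geq 1$, so $\Gamma(1-m)$ has a simple pole while $\Gamma(\frac{1}{2}-m)$ remains finite; hence the quotient vanishes.

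For \eqref{2.7} I would set $z=-\tfrac12$. Theorem~\ref{thm:2.1} then gives the sum as $\frac{\sqrt{\pi}}{2}\cdot\Gamma(\tfrac14)/\Gamma(\tfrac34)$, and I would apply Euler's reflection formula $\Gamma(\tfrac14)\Gamma(\tfrac34)=\pi/\sin(\pi/4)=\pi\sqrt{2}$ to replace $\Gamma(\tfrac34)$ by $\pi\sqrt{2}/\Gamma(\tfrac14)$, which immediately yields $\Gamma(\tfrac14)^2/(2\sqrt{2\pi})$. For \eqref{2.8} I would set $z=-\tfrac32$, obtaining $\frac{\sqrt{\pi}}{2}\cdot\Gamma(-\tfrac14)/\Gamma(\tfrac14)$; the functional equation $\Gamma(\tfrac34)=-\tfrac14\Gamma(-\tfrac14)$ rewrites this as $-2\sqrt{\pi}\,\Gamma(\tfrac34)/\Gamma(\tfrac14)$, and another application of reflection converts it to $-2\sqrt{2}\,\pi^{3/2}/\Gamma(\tfrac14)^2=-(2\pi)^{3/2}/\Gamma(\tfrac14)^2$.

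There is no real obstacle here since each identity is essentially a bookkeeping exercise on top of Theorem~\ref{thm:2.1}; the only point requiring a moment of care is verifying in \eqref{2.6} that the chosen $z=-2m$ avoids the excluded set of odd negative integers, and that the vanishing comes from a genuine pole of $\Gamma(1-m)$ in the denominator rather than an indeterminate $\infty/\infty$ (which it is not, since $\Gamma(\tfrac12-m)$ is finite at half-integers).
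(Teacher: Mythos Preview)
Your proposal is correct and follows exactly the paper's approach: specialize Theorem~\ref{thm:2.1} at the appropriate values of $z$ and simplify. In fact you give more detail than the paper, which merely notes that $\Gamma(\tfrac{z+2}{2})$ has a pole at $z=-2m$ for \eqref{2.6} and that \eqref{2.7}--\eqref{2.8} follow from ``known evaluations and elementary properties of the gamma function.''
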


\begin{proof}
The denominator of the right-hand side of \eqref{2.2} has a pole at $z=-2m$,
while the gamma function in the numerator is finite at $z=-2m$. This proves 
\eqref{2.6}. The remaining two identities follow from some known evaluations
and elementary properties of the gamma function.
\end{proof}

More generally, using the evaluations that go into \eqref{2.7} and 
\eqref{2.8}, along with the fundamental relation $\Gamma(z+1)=z\Gamma(z)$, 
one can easily derive general identities for 
\[
\sum_{k=0}^\infty\frac{\binom{2k}{k}}{4^k(2k+m+\frac{1}{2})},
\]
where $m$ is an arbitrary integer.

We now use the fact that \eqref{2.2} holds for all non-real $z$.

\begin{corollary}\label{cor:2.3}
For all real $y\neq 0$ we have
\begin{equation}\label{2.9}
\left|\sum_{k=0}^\infty\frac{\binom{2k}{k}}{4^k(2k+1+iy)}\right|^2
=\frac{\pi}{2y}\cdot\tanh(\tfrac{\pi y}{2}).
\end{equation}
\end{corollary}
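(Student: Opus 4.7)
The plan is to specialize Theorem~\ref{thm:2.1} at $z=iy$ (which lies in the allowed domain ${\mathbb C}\setminus\{-1,-3,-5,\ldots\}$ for any real $y\neq 0$) and then compute the squared modulus of the gamma ratio
\[
\frac{\sqrt{\pi}}{2}\cdot\frac{\Gamma(\tfrac{1+iy}{2})}{\Gamma(\tfrac{2+iy}{2})}=\frac{\sqrt{\pi}}{2}\cdot\frac{\Gamma(\tfrac{1+iy}{2})}{\Gamma(1+\tfrac{iy}{2})}
\]
using the reflection identities for the gamma function. Concretely, since $\overline{\Gamma(w)}=\Gamma(\overline{w})$, the squared modulus on the left-hand side of \eqref{2.9} equals
\[
\frac{\pi}{4}\cdot\frac{\Gamma(\tfrac{1+iy}{2})\,\Gamma(\tfrac{1-iy}{2})}{\Gamma(1+\tfrac{iy}{2})\,\Gamma(1-\tfrac{iy}{2})}.
\]

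The key step is to invoke the two classical consequences of Euler's reflection formula, namely
\[
\Gamma(\tfrac{1}{2}+w)\,\Gamma(\tfrac{1}{2}-w)=\frac{\pi}{\cos(\pi w)},\qquad
\Gamma(1+w)\,\Gamma(1-w)=\frac{\pi w}{\sin(\pi w)},
\]
which I would apply with $w=iy/2$. Using $\cos(i\theta)=\cosh\theta$ and $\sin(i\theta)=i\sinh\theta$, the numerator becomes $\pi/\cosh(\tfrac{\pi y}{2})$ and the denominator becomes $\pi y/(2\sinh(\tfrac{\pi y}{2}))$. Their quotient is $\tfrac{2}{y}\tanh(\tfrac{\pi y}{2})$, so multiplying by the prefactor $\pi/4$ yields exactly $\tfrac{\pi}{2y}\tanh(\tfrac{\pi y}{2})$, as required.

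There is no real obstacle here beyond careful bookkeeping: Theorem~\ref{thm:2.1} already supplies the closed form, and the remaining work is to recognize that the ratio of gamma moduli is precisely the combination packaged by the two reflection formulas. The one point that deserves attention is the justification that $z=iy$ with real $y\neq0$ lies in the stated domain of analyticity, which is immediate since $iy$ is not a negative odd integer.
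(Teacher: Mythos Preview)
Your proof is correct and follows essentially the same approach as the paper: specialize \eqref{2.2} at $z=iy$ and evaluate the moduli of the gamma factors. The only cosmetic difference is that the paper quotes the identities $|\Gamma(\tfrac12+\tfrac{iy}{2})|^2=\pi/\cosh(\tfrac{\pi y}{2})$ and $|\Gamma(1+\tfrac{iy}{2})|^2=\tfrac{\pi y}{2}/\sinh(\tfrac{\pi y}{2})$ directly from \cite{DLMF}, whereas you derive them from Euler's reflection formula; these are equivalent computations.
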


\begin{proof}
We use \eqref{2.2} with $z=iy$. By identity (5.4.4) in \cite{DLMF}, we have
\[
\left|\Gamma(\tfrac{iy}{2}+\tfrac{1}{2})\right|^2
=\frac{\pi}{\cosh(\tfrac{\pi y}{2})},
\]
and by (5.4.3) in \cite{DLMF},
\[
\left|\Gamma(\tfrac{iy}{2}+1)\right|^2
=\left|\tfrac{iy}{2}\Gamma(\tfrac{iy}{2})\right|^2
=\frac{y^2}{4}\cdot\frac{2\pi}{y\sinh(\tfrac{\pi y}{2})}.
\]
The result now follows by combining these last two identities with 
\eqref{2.2}.
\end{proof}

Although the right-hand side of \eqref{2.9} is not defined at $y=0$, the 
limit as $y\to 0$ exists and is $\pi^2/4$. We therefore recover the
identity
\[
\sum_{k=0}^\infty\frac{\binom{2k}{k}}{4^k(2k+1)} =\frac{\pi}{2},
\]
which is the case $\ell=0$ in \eqref{1.2}. This identity also follows from 
\eqref{2.2} with $z=0$ and using the fact that $\Gamma(\frac{1}{2})=\sqrt{\pi}$.

\section{The missing values in Theorem~\ref{thm:2.1}}\label{sec:3}

Considering the identity \eqref{2.2}, we see that both sides are indeed not
defined at odd negative integers. On the other hand, if we set $z=-2m-1$,
$m=0, 1, 2,\ldots$ and disregard the term $k=m$ in the summation, then the 
singularity on the left is removed. Using the relation \eqref{2.5}, we 
see that the series converges, just as it does for all other values of $z$.
Also, in the case $m=0$, or equivalently $z=-1$, we have the well-known 
identity
\begin{equation}\label{3.1}
\sum_{k=1}^\infty\frac{\binom{2k}{k}}{4^k(2k)} =\log{2},
\end{equation}
which can be found, for instance, in \cite[p.~450]{Le}.

These considerations lead to the following result and its proof. We require the
harmonic numbers, which are defined by
\begin{equation}\label{3.2}
H_0:=1\quad\hbox{and}\quad H_n:=\sum_{j=1}^n\frac{1}{j},\quad n=1, 2, 3,\ldots.
\end{equation}

\begin{theorem}\label{thm:3.1}
For all integers $m\geq 0$ we have
\begin{equation}\label{3.3}
\sum_{\substack{k=0\\k\neq m}}^\infty\frac{\binom{2k}{k}}{4^k(2k-2m)}
=\binom{2m}{m}\cdot\frac{\log{2}-H_{2m}+H_m}{4^m}.
\end{equation}
\end{theorem}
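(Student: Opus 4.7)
The plan is to derive \eqref{3.3} from Theorem~\ref{thm:2.1} by Laurent expansion around $z=-2m-1$. Both sides of \eqref{2.2} have a simple pole there: on the left, only the $k=m$ summand is singular, while on the right the pole comes from $\Gamma(\tfrac{z+1}{2})$. Subtracting the $k=m$ term from both sides yields functions analytic at $z=-2m-1$, and evaluating at this point transforms the left side into the desired sum and the right side into the claimed closed form.

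To carry this out I would substitute $z=-2m-1+2\epsilon$, so that $2k+1+z=2(k-m)+2\epsilon$; the $k=m$ term then contributes $\binom{2m}{m}/(4^m\cdot 2\epsilon)$. For the right-hand side I would expand
\[
\Gamma(-m+\epsilon)=\frac{(-1)^m}{m!}\left[\frac{1}{\epsilon}+\psi(m+1)+O(\epsilon)\right],
\]
Taylor expand $\Gamma(\tfrac{1}{2}-m+\epsilon)=\Gamma(\tfrac{1}{2}-m)\bigl[1+\epsilon\,\psi(\tfrac{1}{2}-m)+O(\epsilon^2)\bigr]$, and invoke the standard value $\Gamma(\tfrac{1}{2}-m)=(-4)^m m!\sqrt{\pi}/(2m)!$. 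The residues on the two sides then automatically agree, and equating the constant terms in $\epsilon$ yields
\[
\sum_{\substack{k=0\\k\neq m}}^\infty\frac{\binom{2k}{k}}{4^k(2k-2m)}
=\frac{\binom{2m}{m}}{4^m}\cdot\frac{\psi(m+1)-\psi(\tfrac{1}{2}-m)}{2}.
\]

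It remains to reduce the digamma difference to $\log 2-H_{2m}+H_m$. I would combine $\psi(m+1)=-\gamma+H_m$ with the half-integer value $\psi(\tfrac{1}{2})=-\gamma-2\log 2$ and iterate $\psi(w)=\psi(w+1)-1/w$ downward $m$ times from $\tfrac{1}{2}$ to obtain $\psi(\tfrac{1}{2}-m)=-\gamma-2\log 2+2\sum_{j=1}^{m}1/(2j-1)$; the elementary identity $\sum_{j=1}^m 1/(2j-1)=H_{2m}-\tfrac{1}{2}H_m$ then yields $\psi(m+1)-\psi(\tfrac{1}{2}-m)=2(\log 2-H_{2m}+H_m)$, which proves \eqref{3.3}. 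I expect the main obstacle to be the careful bookkeeping in the Laurent expansion of the ratio of gammas (and justifying the termwise limit in the series via the bound \eqref{2.5}); once the half-integer digamma value is in hand, the remaining manipulations are essentially routine.
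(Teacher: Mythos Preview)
Your proposal is correct and follows essentially the same approach as the paper: subtract the singular $k=m$ term from both sides of \eqref{2.2} and compute the limit as $z\to-2m-1$, then reduce $\psi(\tfrac{1}{2}-m)$ to harmonic numbers. The only difference is in execution---the paper applies L'Hospital's rule after algebraically rewriting $\Gamma(\tfrac{z+1}{2})(m+\tfrac{z+1}{2})$ to remove the singularity, whereas you invoke the standard Laurent expansion $\Gamma(-m+\epsilon)=\tfrac{(-1)^m}{m!}\bigl[\tfrac{1}{\epsilon}+\psi(m+1)+O(\epsilon)\bigr]$ directly, which is a bit more economical but amounts to the same computation.
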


Obviously, with $m=0$ we get \eqref{3.1}. The next few cases are
\begin{align*}
\sum_{\substack{k=0\\k\neq 1}}^\infty\frac{\binom{2k}{k}}{4^k(2k-2)}
&=\frac{\log{2}}{2}-\frac{1}{4},\\
\sum_{\substack{k=0\\k\neq 2}}^\infty\frac{\binom{2k}{k}}{4^k(2k-4)}
&=\frac{3\log{2}}{8}-\frac{7}{32},\\
\sum_{\substack{k=0\\k\neq 3}}^\infty\frac{\binom{2k}{k}}{4^k(2k-6)}
&=\frac{5\log{2}}{16}-\frac{37}{192}.
\end{align*}

In the proof of Theorem~\ref{thm:3.1} and later in this paper we will make use
of the digamma function, defined by
\begin{equation}\label{3.4}
\psi(z)=\frac{d}{dz}\log{\Gamma(z)} = \frac{\Gamma'(z)}{\Gamma(z)}.
\end{equation}
According to the identities (5.4.14) and (5.4.15) in \cite{DLMF}, the digamma
function has the special values
\begin{equation}\label{3.5}
\psi(m+1) = H_m-\gamma,\qquad\psi(m+\tfrac{1}{2}) = 2H_{2m}-H_m-2\log{2}-\gamma,
\end{equation}
where $\gamma$ is Euler's constant. The function $\psi(z)$ also satisfies the
reflection formula
\begin{equation}\label{3.6}
\psi(z) = \psi(1-z)-\pi\cot(\pi z),\quad z\not\in{\mathbb Z}
\end{equation}
(see, e.g., \cite[eq.~5.5.4]{DLMF}), and with $z=-m+\frac{1}{2}$ the cotangent
term in \eqref{3.6} disappears, and we find with the second identity in 
\eqref{3.5} that 
\begin{equation}\label{3.7}
\psi(-m+\tfrac{1}{2}) = 2H_{2m}-H_m-2\log{2}-\gamma.
\end{equation}
We are now ready to prove the theorem.

\begin{proof}[Proof of Theorem~\ref{thm:3.1}]
We denote the series on the left of \eqref{3.3} by $S(m)$ and note that by
\eqref{2.2} we have 
\begin{align*}
S(m)&=\lim_{z\to -2m-1}\left(
\frac{\sqrt{\pi}}{2}\cdot\frac{\Gamma(\frac{z+1}{2})}{\Gamma(\frac{z+2}{2})}
-\frac{\binom{2m}{m}}{4^m(2m+1+z)}\right) \\
&=\lim_{z\to -2m-1}\frac{1}{2m+1+z}\left(\sqrt{\pi}
\cdot\frac{\Gamma(\frac{z+1}{2})(m+\frac{z+1}{2})}{\Gamma(\frac{z+2}{2})}
-4^{-m}\binom{2m}{m}\right).
\end{align*}
Using L'Hospital's rule, this gives
\[
S(m)=\sqrt{\pi}\cdot\frac{d}{dz}\left.
\frac{\Gamma(\frac{z+1}{2})(m+\frac{z+1}{2})}{\Gamma(\frac{z+2}{2})}
\right|_{z=-2m-1}.
\]
To deal with the removable singularity in the numerator of the last fraction,
we iterate the basic identity $w\Gamma(w)=\Gamma(w+1)$, obtaining
\begin{align*}
\Gamma(\tfrac{z+1}{2})(m+\tfrac{z+1}{2}) &=\frac{\Gamma(m+1+\frac{z+1}{2})}
{(\frac{z+1}{2})(\frac{z+1}{2}+1)\cdots(\frac{z+1}{2}+m-1)} \\
&=\frac{\Gamma(m+1+\frac{z+1}{2})\cdot 2^m}{(z+1)(z+3)\cdots(z+2m-1)},
\end{align*}
so that
\begin{equation}\label{3.8}
S(m)=2^m\sqrt{\pi}\cdot\frac{d}{dz}\left.
\frac{\Gamma(m+1+\frac{z+1}{2})}{\Gamma(\frac{z+2}{2})(z+1)(z+3)\cdots(z+2m-1)}
\right|_{z=-2m-1}.
\end{equation}
For ease of notation, we set
\begin{align*}
G(z)&:=\Gamma(m+1+\tfrac{z+1}{2}),\quad H(z):=\Gamma(\tfrac{z+2}{2}),\\
P(z)&:=(z+1)(z+3)\cdots(z+2m-1).
\end{align*}
Then we have
\begin{equation}\label{3.9}
G(-2m-1) = \Gamma(1) = 1,
\end{equation}
and with \eqref{3.4} and the first identity of \eqref{3.5} we get
\begin{equation}\label{3.10}
G'(-2m-1) =\frac{\Gamma(1)\psi(1)}{2} = -\frac{\gamma}{2}.
\end{equation}
Next, we have
\begin{equation}\label{3.11}
H(-2m-1) = \Gamma(-m+\tfrac{1}{2}) = \frac{(-4)^m m!\sqrt{\pi}}{(2m)!},
\end{equation}
where the second equation can be found, for instance, in 
\cite[eq.~8.339.3]{GR}. Further, with \eqref{3.11} and \eqref{3.7} we get
\begin{align}
H'(-2m-1) 
&= \frac{1}{2}\Gamma(-m+\tfrac{1}{2})\psi(-m+\tfrac{1}{2})\label{3.12} \\
&= \frac{(-4)^m m!\sqrt{\pi}}{2(2m)!}\left(2H_{2m}-H_m-2\log{2}-\gamma\right).
\nonumber
\end{align}
Finally, it is easy to see that
\begin{equation}\label{3.13}
P(-2m-1) = (-2)^m m!,
\end{equation}
and since
\[
P'(z) = P(z)\sum_{j=1}^m\frac{1}{z+2j-1},
\]
we have with \eqref{3.13} and the definition of $H_m$,
\begin{equation}\label{3.14}
P'(-2m-1) = (-2)^{m-1}m!H_m.
\end{equation}
By the standard rules of differentiation, the derivative in \eqref{3.8} becomes
\[
\frac{G'(z)}{H(z)P(z)}-\frac{G(z)P'(z)}{H(z)P(z)^2}
-\frac{G(z)H'(z)}{H(z)^2P(z)}.
\]
We now use the identities \eqref{3.9}--\eqref{3.14} to evaluate this last 
expression at $z=-2m-1$. After some straightforward manipulations and
cancellations, we get with \eqref{3.8},
\[
S(m)=2^m\sqrt{\pi}\binom{2m}{m}
\cdot\frac{log{2}-H_{2m}+H_m}{8^m\sqrt{\pi}}.
\]
This is clearly the same as the right-hand side of \eqref{3.3}, and the proof
is complete.
\end{proof}

\section{Squares and cubes in the denominators}\label{sec:4}

We can get more identities by differentiating both sides of \eqref{2.2}.
To do so, we use again \eqref{3.4} in the form $\Gamma'(z)=\Gamma(z)\psi(z)$.

\begin{corollary}\label{cor:4.1}
For any $z\in{\mathbb C}\setminus\{-1,-3,-5,\ldots\}$ we have
\begin{equation}\label{4.1}
\sum_{k=0}^\infty\frac{\binom{2k}{k}}{4^k(2k+1+z)^2}
=\frac{\sqrt{\pi}}{4}\cdot\frac{\Gamma(\frac{z+1}{2})}{\Gamma(\frac{z+2}{2})}
\left(\psi(\tfrac{z+2}{2})-\psi(\tfrac{z+1}{2})\right).
\end{equation}
\end{corollary}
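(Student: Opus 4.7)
The plan is simply to differentiate the identity \eqref{2.2} on both sides with respect to $z$. Since both sides of \eqref{2.2} are analytic in $z\in\mathbb{C}\setminus\{-1,-3,-5,\ldots\}$ (as observed in the proof of Theorem~\ref{thm:2.1}), their derivatives must also agree in this region. Differentiating the left-hand side termwise (with a sign) will produce the desired sum with $(2k+1+z)^2$ in the denominator, while the logarithmic derivative of the gamma quotient on the right will produce the digamma difference.

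For the left-hand side, I would interchange differentiation and summation to write
\[
\frac{d}{dz}\sum_{k=0}^\infty\frac{\binom{2k}{k}}{4^k(2k+1+z)}
= -\sum_{k=0}^\infty\frac{\binom{2k}{k}}{4^k(2k+1+z)^2}.
\]
To justify this exchange, I would invoke the upper bound $\binom{2k}{k}/4^k<1/\sqrt{\pi k}$ from \eqref{2.5}, which shows that the differentiated series converges absolutely and uniformly on compact subsets of $\mathbb{C}\setminus\{-1,-3,-5,\ldots\}$ (the terms are $O(k^{-5/2})$ locally). Standard theorems on term-by-term differentiation of analytic series then apply.

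For the right-hand side, I would use $\Gamma'(w)=\Gamma(w)\psi(w)$ together with the chain rule (which contributes the factor $\tfrac{1}{2}$ from differentiating $\tfrac{z+1}{2}$ and $\tfrac{z+2}{2}$). The quotient rule gives
\[
\frac{d}{dz}\frac{\Gamma(\tfrac{z+1}{2})}{\Gamma(\tfrac{z+2}{2})}
=\frac{1}{2}\cdot\frac{\Gamma(\tfrac{z+1}{2})}{\Gamma(\tfrac{z+2}{2})}
\bigl(\psi(\tfrac{z+1}{2})-\psi(\tfrac{z+2}{2})\bigr),
\]
so multiplying by $\sqrt{\pi}/2$ produces a prefactor $\sqrt{\pi}/4$. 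Equating the two derivatives and multiplying through by $-1$ absorbs the minus sign from the LHS into a swap of the two $\psi$ arguments on the RHS, yielding exactly \eqref{4.1}.

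There is no serious obstacle here; the only subtlety is the justification of termwise differentiation, which is handled cleanly by \eqref{2.5} as described above. Alternatively, one can bypass term-by-term differentiation entirely by appealing to the fact that if two analytic functions agree on an open set, then so do their derivatives, and then separately verifying (again via \eqref{2.5}) that the derivative of the left-hand side of \eqref{2.2} is indeed represented by the differentiated series.
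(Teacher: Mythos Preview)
Your proposal is correct and matches the paper's approach exactly: the paper simply states that \eqref{4.1} follows by differentiating both sides of \eqref{2.2} using $\Gamma'(z)=\Gamma(z)\psi(z)$. You have in fact supplied more detail than the paper does, including the justification of termwise differentiation via \eqref{2.5}.
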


As special cases we get the following explicit evaluations.

\begin{corollary}\label{cor:4.2}
For all integers $m\geq 1$, we have
\begin{equation}\label{4.2}
\sum_{k=0}^\infty\frac{\binom{2k}{k}}{4^k(2k+2m)^2}
=\frac{2^{2m-1}}{m\binom{2m}{m}}\left(H_{2m}-H_m+\tfrac{1}{2m}-\log{2}\right),
\end{equation}
and for $m\geq 0$, 
\begin{equation}\label{4.3}
\sum_{k=0}^\infty\frac{\binom{2k}{k}}{4^k(2k+2m+1)^2}
=\frac{\pi\binom{2m}{m}}{2^{2m+1}}\left(H_m-H_{2m}+\log{2}\right).
\end{equation}
In particular,
\begin{align}
\sum_{k=0}^\infty\frac{\binom{2k}{k}}{4^k(2k+1)^2}
&=\frac{\pi\log{2}}{2},\label{4.4}\\
\sum_{k=0}^\infty\frac{\binom{2k}{k}}{4^k(2k+2)^2}
&=1-\log{2},\label{4.5}\\
\sum_{k=0}^\infty\frac{\binom{2k}{k}}{4^k(2k+3)^2}
&=\frac{\pi}{8}(-1+2\log{2}),\label{4.6}\\
\sum_{k=0}^\infty\frac{\binom{2k}{k}}{4^k(2k+4)^2}
&=\frac{5}{9}-\frac{2}{3}\log{2}.\label{4.7}
\end{align}
\end{corollary}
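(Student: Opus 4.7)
The plan is to specialize Corollary~\ref{cor:4.1} at the two arithmetic progressions $z=2m-1$ and $z=2m$, and then translate the resulting gamma/digamma expressions into explicit form using only the Legendre duplication formula \eqref{2.1} and the special digamma values \eqref{3.5}. So \eqref{4.1} does all the analytic work, and what remains is pure symbolic manipulation.

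For \eqref{4.2}, I would set $z=2m-1$, so that $2k+1+z=2k+2m$ and the arguments of the two gammas on the right-hand side of \eqref{4.1} become $m$ and $m+\tfrac{1}{2}$. The identity reduces to
\[
\sum_{k=0}^\infty\frac{\binom{2k}{k}}{4^k(2k+2m)^2}
=\frac{\sqrt{\pi}}{4}\cdot\frac{\Gamma(m)}{\Gamma(m+\tfrac{1}{2})}\bigl(\psi(m+\tfrac{1}{2})-\psi(m)\bigr).
\]
I would then simplify the gamma ratio via \eqref{2.1} with $z=m$, together with $\binom{2m}{m}=\Gamma(2m+1)/\Gamma(m+1)^2$, producing $\Gamma(m)/\Gamma(m+\tfrac{1}{2})=2^{2m}/(\sqrt{\pi}\,m\binom{2m}{m})$. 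For the digamma difference I would use both parts of \eqref{3.5} together with the shift $\psi(m)=\psi(m+1)-1/m$ to obtain $\psi(m+\tfrac{1}{2})-\psi(m)=2(H_{2m}-H_m+\tfrac{1}{2m}-\log 2)$. Multiplying these two pieces gives \eqref{4.2}.

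For \eqref{4.3}, I would set $z=2m$, which turns the right-hand side of \eqref{4.1} into
\[
\frac{\sqrt{\pi}}{4}\cdot\frac{\Gamma(m+\tfrac{1}{2})}{\Gamma(m+1)}\bigl(\psi(m+1)-\psi(m+\tfrac{1}{2})\bigr).
\]
Applying \eqref{2.1} with $z=m+\tfrac{1}{2}$ gives $\Gamma(m+\tfrac{1}{2})/\Gamma(m+1)=\sqrt{\pi}\binom{2m}{m}/2^{2m}$, while the identities in \eqref{3.5} yield $\psi(m+1)-\psi(m+\tfrac{1}{2})=2(H_m-H_{2m}+\log 2)$. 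Combining these produces \eqref{4.3}. The four particular identities \eqref{4.4}--\eqref{4.7} then follow by setting $m=0,1,2$ in \eqref{4.3} (giving \eqref{4.4} and \eqref{4.6}) and $m=1,2$ in \eqref{4.2} (giving \eqref{4.5} and \eqref{4.7}), after observing that the $m=0$ specialization of \eqref{4.2} is excluded by the hypothesis $m\geq 1$.

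The work is entirely symbolic, with no convergence question and no analytic continuation to worry about since these were already handled in Theorem~\ref{thm:2.1} and Corollary~\ref{cor:4.1}. The only real obstacle is careful bookkeeping of the factors of $2$, $\sqrt{\pi}$ and $\binom{2m}{m}$ that appear when juggling the duplication formula against the gamma ratio and the binomial coefficient; a single dropped factor of $2$ would shift the final exponent and spoil the match. I would therefore verify the power of $2$ in each of \eqref{4.2} and \eqref{4.3} by independently checking the case $m=1$ against a direct tail computation.
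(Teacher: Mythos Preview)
Your proposal is correct and follows essentially the same route as the paper: specialize \eqref{4.1} at $z=2m-1$ and $z=2m$, simplify the gamma ratio (the paper uses the explicit evaluation \eqref{4.8} rather than invoking \eqref{2.1} directly, but these are equivalent), compute the digamma difference via \eqref{3.5}, and read off the special cases. One trivial slip: the list ``$m=0,1,2$ in \eqref{4.3}'' should read $m=0,1$, since only \eqref{4.4} and \eqref{4.6} come from \eqref{4.3}.
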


\begin{proof}
In \eqref{4.1} we set $z=2m-1$, $m\geq 1$. Then with the identity
\begin{equation}\label{4.8}
\Gamma(m+\tfrac{1}{2}) = \frac{(2m)!\sqrt{\pi}}{4^m m!}\quad (m=0,1,2,\ldots).
\end{equation}
(see, e.g., \cite[eq.~8.339.2]{GR}) and the fact that $\Gamma(m)=(m-1)!$, we get
\begin{equation}\label{4.9}
\frac{\Gamma(\frac{z+1}{2})}{\Gamma(\frac{z+2}{2})}
=\frac{\Gamma(m)}{\Gamma(m+\frac{1}{2})}
=\frac{4^m}{m\binom{2m}{m}\sqrt{\pi}},
\end{equation}
and with \eqref{3.5},
\begin{equation}\label{4.10}
\psi(\tfrac{z+2}{2})-\psi(\tfrac{z+1}{2})=\psi(m+\tfrac{1}{2})-\psi(m)
= 2H_{2m}-2H_m+\tfrac{1}{m}-2\log{2}.
\end{equation}
The identities \eqref{4.9} and \eqref{4.10}, combined with \eqref{4.1}, then
give the desired identity \eqref{4.2}.

We obtain \eqref{4.3} analogously, using again \eqref{4.8} and \eqref{3.5}.
Finally, by evaluating the appropriate small values of the harmonic numbers,
we easily obtain \eqref{4.4}--\eqref{4.7} from \eqref{4.2} or \eqref{4.3}.
\end{proof}

In analogy to \eqref{2.6}, we obtain easy special values when $z$ is an even
negative integer in \eqref{4.1}. This time, these values are somewhat more 
interesting than in \eqref{2.6}.

\begin{corollary}\label{cor:4.3}
For all integers $m\geq 1$, we have
\begin{equation}\label{4.11}
\sum_{k=0}^\infty\frac{\binom{2k}{k}}{4^k(2k+1-2m)^2}
=\frac{4^{m-1}\pi}{m\binom{2m}{m}}.
\end{equation}
\end{corollary}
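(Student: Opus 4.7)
The plan is to apply Corollary~\ref{cor:4.1} with $z=-2m$ and take the appropriate limit on the right-hand side. First I would note that, by the asymptotic estimate \eqref{2.5} used in the proof of Theorem~\ref{thm:2.1}, the series on the left of \eqref{4.1} converges and defines a function analytic on $\mathbb{C}\setminus\{-1,-3,-5,\ldots\}$; in particular it is regular at $z=-2m$ for $m\geq 1$. Hence any apparent singularity on the right of \eqref{4.1} at $z=-2m$ must be removable. That singularity comes only from the pole of $\Gamma(\tfrac{z+2}{2})$ and of $\psi(\tfrac{z+2}{2})$ at $\tfrac{z+2}{2}=1-m$; the factors $\Gamma(\tfrac{z+1}{2})$ and $\psi(\tfrac{z+1}{2})$ evaluate to the finite quantities $\Gamma(\tfrac{1}{2}-m)$ and $\psi(\tfrac{1}{2}-m)$, already given by \eqref{3.11} and \eqref{3.7}.

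Next I would compute the two limits
\[
\lim_{w\to 1-m}\frac{1}{\Gamma(w)}=0\qquad\text{and}\qquad
\lim_{w\to 1-m}\frac{\psi(w)}{\Gamma(w)}=(-1)^m(m-1)!.
\]
The first is immediate from the entirety of $1/\Gamma$ and its simple zeros at non-positive integers. The second follows from the leading Laurent terms $\Gamma(w)\sim\tfrac{(-1)^{m-1}}{(m-1)!}\cdot\tfrac{1}{w-(1-m)}$ and $\psi(w)\sim-\tfrac{1}{w-(1-m)}$ near $w=1-m$, whose quotient has the claimed limit. With these in hand, the first limit annihilates the contribution of $\psi(\tfrac{z+1}{2})$ in \eqref{4.1}, while the second produces a finite contribution $(-1)^m(m-1)!\,\Gamma(\tfrac{1}{2}-m)$ from the $\psi(\tfrac{z+2}{2})$ term.

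Finally I would substitute $\Gamma(\tfrac{1}{2}-m)=\tfrac{(-4)^m m!\sqrt{\pi}}{(2m)!}$ from \eqref{3.11}, multiply by the prefactor $\sqrt{\pi}/4$, and simplify using $\tfrac{m!(m-1)!}{(2m)!}=\tfrac{1}{m\binom{2m}{m}}$ and $(-1)^m\cdot(-1)^m=1$ to obtain $\tfrac{4^{m-1}\pi}{m\binom{2m}{m}}$. The main obstacle is simply the Laurent-coefficient bookkeeping and the signs. A slicker equivalent is to observe that the target sum equals $-f'(-2m)$, where $f(z)=\tfrac{\sqrt{\pi}}{2}\,\Gamma(\tfrac{z+1}{2})\cdot\tfrac{1}{\Gamma((z+2)/2)}$ is the function of Theorem~\ref{thm:2.1}; since $f(-2m)=0$ by Corollary~\ref{cor:2.2}, the product rule leaves only $\tfrac{\sqrt{\pi}}{2}\Gamma(\tfrac{1}{2}-m)\cdot\tfrac{d}{dz}\bigl[1/\Gamma((z+2)/2)\bigr]_{z=-2m}$, and the derivative of $1/\Gamma$ at $w=-(m-1)$ is the elementary quantity $(-1)^{m-1}(m-1)!$, which bypasses the digamma singularity altogether.
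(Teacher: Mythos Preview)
Your proposal is correct and follows essentially the same route as the paper: take the limit of the right-hand side of \eqref{4.1} as $z\to-2m$, observe that the $\psi(\tfrac{z+1}{2})$ contribution vanishes because $1/\Gamma$ has a zero at $1-m$, evaluate $\lim_{w\to 1-m}\psi(w)/\Gamma(w)=(-1)^m(m-1)!$ via the leading Laurent coefficients (the paper phrases this as a ratio of residues), and substitute the known value of $\Gamma(\tfrac12-m)$. Your alternative formulation via $-f'(-2m)$ and the derivative of $1/\Gamma$ is a clean repackaging of the same computation, not a materially different argument.
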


\begin{proof}
We know that both $\Gamma(z)$ and $\psi(z)$ have poles of order 1 at integers
$\leq 0$ and are analytic elsewhere in $\mathbb C$. This means that for 
integers $m\geq 1$ we have by \eqref{4.1},
\begin{equation}\label{4.12}
\sum_{k=0}^\infty\frac{\binom{2k}{k}}{4^k(2k+1-2m)^2}
=\frac{\sqrt{\pi}}{4}\cdot\Gamma(-m+\frac{1}{2})
\lim_{z\to 1-m}\frac{\psi(z)}{\Gamma(z)}.
\end{equation}
Since the residues are known to be 
\[
{\rm Res}(\Gamma,1-m)=\frac{(-1)^{m-1}}{(m-1)!}\quad\hbox{and}\quad
{\rm Res}(\psi, 1-m)=-1,
\]
(see, e.g., \cite[Sect.~5.2(i)]{DLMF}), we have
\begin{equation}\label{4.13}
\lim_{z\to 1-m}\frac{\psi(z)}{\Gamma(z)}=(-1)^m(m-1)!.
\end{equation}
Next, by \eqref{4.8} and the reflection formula 
$\Gamma(z)\Gamma(1-z)=\pi/\sin(\pi z)$, we have
\begin{equation}\label{4.14}
\Gamma(-m+\tfrac{1}{2})=(-1)^m\frac{m!4^m}{(2m)!}\sqrt{\pi}.
\end{equation}
This, together with \eqref{4.13} and \eqref{4.12}, gives the desired identity
\eqref{4.11}.
\end{proof}

We can actually go further than Corollary~\ref{cor:4.1} and differentiate 
both sides of \eqref{4.1}. Then we easily obtain the following.

\begin{corollary}\label{cor:4.4}
For any $z\in{\mathbb C}\setminus\{-1,-3,-5,\ldots\}$ we have
\begin{equation}\label{4.15}
\sum_{k=0}^\infty\frac{\binom{2k}{k}}{4^k(2k+1+z)^3}
=\frac{\sqrt{\pi}}{16}\cdot\frac{\Gamma(\frac{z+1}{2})}{\Gamma(\frac{z+2}{2})}
\left(\left(\psi(\tfrac{z+1}{2})-\psi(\tfrac{z+2}{2})\right)^2
+\psi'(\tfrac{z+1}{2})-\psi'(\tfrac{z+2}{2})\right),
\end{equation}
and for $z=0$ and $z=1$ we get respectively
\begin{align}
\sum_{k=0}^\infty\frac{\binom{2k}{k}}{4^k(2k+1)^3}
&= \frac{\pi}{48}\left(\pi^2+12\log^2{2}\right),\label{4.16}\\
\sum_{k=0}^\infty\frac{\binom{2k}{k}}{4^k(2k+2)^3}
&= 1-\frac{\pi^2}{24}-\log{2}+\frac{\log^2{2}}{2}.\label{4.17}
\end{align}
\end{corollary}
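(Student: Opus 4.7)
The plan is to differentiate the identity \eqref{4.1} of Corollary~\ref{cor:4.1} with respect to $z$; this will automatically produce a factor of $-2/(2k+1+z)^3$ in each term on the left, yielding \eqref{4.15} after a sign change and division by $2$. Term-by-term differentiation on the series side is legitimate because the bounds \eqref{2.5} on $\binom{2k}{k}$ guarantee uniform convergence of the differentiated series on compact subsets of ${\mathbb C}\setminus\{-1,-3,-5,\ldots\}$.

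The calculation on the right of \eqref{4.1} is a matter of the product rule combined with the relation $\Gamma'(z)=\Gamma(z)\psi(z)$. Writing
\[
F(z):=\frac{\Gamma(\frac{z+1}{2})}{\Gamma(\frac{z+2}{2})},\qquad
\Psi(z):=\psi(\tfrac{z+2}{2})-\psi(\tfrac{z+1}{2}),
\]
logarithmic differentiation yields $F'(z)=-\tfrac{1}{2}F(z)\Psi(z)$, and the chain rule gives $\Psi'(z)=\tfrac{1}{2}\bigl(\psi'(\tfrac{z+2}{2})-\psi'(\tfrac{z+1}{2})\bigr)$. Applying the product rule to $F(z)\Psi(z)$ and multiplying by $\sqrt{\pi}/4$ then by $-1/2$ produces exactly the right-hand side of \eqref{4.15}, after observing that $\Psi(z)^2=\bigl(\psi(\tfrac{z+1}{2})-\psi(\tfrac{z+2}{2})\bigr)^2$.

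For the specializations \eqref{4.16} and \eqref{4.17}, I would set $z=0$ and $z=1$ respectively in \eqref{4.15} and use the standard values $\Gamma(\tfrac{1}{2})=\sqrt{\pi}$, $\Gamma(\tfrac{3}{2})=\tfrac{\sqrt{\pi}}{2}$, $\psi(1)=-\gamma$, $\psi(\tfrac{1}{2})=-\gamma-2\log 2$ and $\psi(\tfrac{3}{2})=2-2\log 2-\gamma$ (these follow from \eqref{3.5}), together with the trigamma values $\psi'(1)=\pi^2/6$, $\psi'(\tfrac{1}{2})=\pi^2/2$, and $\psi'(\tfrac{3}{2})=\pi^2/2-4$ obtained from the recursion $\psi'(z+1)=\psi'(z)-1/z^2$. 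Substitution and algebraic simplification then produce the claimed closed forms.

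The main obstacle is essentially bookkeeping: keeping the signs in the product-rule derivative straight and correctly recognizing that the term $\Psi(z)^2$ appearing with a positive sign in the final expression comes from the $-\tfrac{1}{2}F(z)\Psi(z)$ factor in $F'(z)$ combined with the overall sign reversal from the $(2k+1+z)^{-2}\to (2k+1+z)^{-3}$ differentiation. Once that is set up correctly, the rest reduces to inserting known values of $\psi$ and $\psi'$.
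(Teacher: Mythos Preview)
Your proof is correct and takes essentially the same approach as the paper: differentiate \eqref{4.1} term-by-term using $\Gamma'(z)=\Gamma(z)\psi(z)$, then specialize with the trigamma values $\psi'(\tfrac{1}{2})=\pi^2/2$, $\psi'(1)=\pi^2/6$, $\psi'(\tfrac{3}{2})=\pi^2/2-4$. In fact you supply considerably more computational detail (the explicit logarithmic differentiation of $F$, the sign-tracking, and the justification via \eqref{2.5}) than the paper's own proof, which merely asserts that \eqref{4.15} follows ``immediately'' from differentiating \eqref{4.1}.
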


\begin{proof}
The identity \eqref{4.15} follows immediately from differentiating both sides
of \eqref{4.1}, using again the relation $\Gamma'(z)=\Gamma(z)\Psi(z)$. To
obtain \eqref{4.16} and \eqref{4.17}, we use some identities from the proof of
Corollary~\ref{cor:4.2}, in addition to 
\[
\psi'(\tfrac{1}{2})=\frac{\pi^2}{2},\quad
\psi'(1)=\frac{\pi^2}{6},\quad
\psi'(\tfrac{3}{2})=\frac{\pi^2}{2}-4.
\]
These last three evaluations follow from the identities (5.15.3), (5.15.2)
and (5.15.5), respectively, in \cite{DLMF}.
\end{proof}

In analogy to \eqref{2.6} and \eqref{4.11}, and using similar methods as in
the proof of \eqref{4.11}, we obtain the following result.

\begin{corollary}\label{cor:4.5}
For all integers $m\geq 1$, we have
\begin{equation}\label{4.18}
\sum_{k=0}^\infty\frac{\binom{2k}{k}}{4^k(2k+1-2m)^3}
=-\frac{4^{m-1}\pi}{m\binom{2m}{m}}\left(H_{2m}-H_m+\frac{1}{2m}-\log{2}\right).
\end{equation}
\end{corollary}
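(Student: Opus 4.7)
The plan is to apply \eqref{4.15} at $z=-2m$ and evaluate the apparent singularity on the right-hand side as a limit, in the spirit of the proof of Corollary~\ref{cor:4.3}. The left-hand side of \eqref{4.18} is plainly analytic at $z=-2m$, since the factors $(2k+1-2m)^3$ are odd cubes and never vanish. On the right, writing $\alpha=\frac{z+1}{2}$ and $w=\frac{z+2}{2}$, the point $z=-2m$ is a simple pole of $\Gamma(w)$ and of $\psi(w)$, and a double pole of $\psi'(w)$, whereas $\Gamma(\alpha)$, $\psi(\alpha)$, and $\psi'(\alpha)$ all remain finite there since $\alpha\to\tfrac{1}{2}-m$.

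The approach is to introduce the local variable $u=w-(1-m)=\tfrac{z+2m}{2}$ and expand each relevant function as a Laurent series in $u$, using the recurrences $\psi(w+1)=\psi(w)+1/w$ and $\psi'(w+1)=\psi'(w)-1/w^2$ together with the standard residue of $\Gamma$ at $w=1-m$. The crucial structural observation is the cancellation
\[
\psi(w)^2 - \psi'(w) = -\frac{2(H_{m-1}-\gamma)}{u} + O(1)
\]
near $u=0$: the two double poles match with opposite signs. Consequently, the entire bracket $(\psi(\alpha)-\psi(w))^2+\psi'(\alpha)-\psi'(w)$ has at worst a simple pole in $u$, with a residue easily read off from the constant terms in the Laurent expansions.

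This simple pole is then exactly cancelled by the simple zero of $\Gamma(\alpha)/\Gamma(w)$ at $z=-2m$, whose leading coefficient can be identified using \eqref{4.14}. Multiplying the two factors together and letting $u\to 0$, the only surviving contribution is the product of this leading coefficient with the residue of the bracket. Using \eqref{3.7} to evaluate $\psi(\tfrac{1}{2}-m)$ and the elementary identity $H_{m-1}=H_m-1/m$, the resulting expression should collapse to the right-hand side of \eqref{4.18} after multiplication by the prefactor $\sqrt{\pi}/16$ from \eqref{4.15}.

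The hard part will be bookkeeping: several Laurent expansions (of $\Gamma$, $\psi$, and $\psi'$) have to be multiplied and combined while carefully tracking signs and the constants $\gamma$ and $\pi^2/6$, both of which must drop out of the final expression. Once the key $1/u^2$ cancellation above is recognized, however, the remainder of the argument follows the same pattern as in the proof of Corollary~\ref{cor:4.3}.
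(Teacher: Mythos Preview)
Your proposal is correct and follows essentially the same route as the paper: the paper also evaluates \eqref{4.15} at $z=-2m$ as a limit, and your ``crucial structural observation'' that $\psi(w)^2-\psi'(w)$ has only a simple pole at $w=1-m$ with residue $-2(H_{m-1}-\gamma)$ is exactly the content of Lemma~\ref{lem:4.6}, which the paper proves separately and then feeds into the same product-of-residues computation you outline. The only cosmetic difference is that the paper organizes the limit by first dividing through by $\Gamma(1-\mu)$ (see \eqref{4.22}) rather than multiplying the Laurent expansions of the bracket and of $\Gamma(\alpha)/\Gamma(w)$ directly, but the bookkeeping and the final collapse via \eqref{3.7} and $H_{m-1}=H_m-1/m$ are identical.
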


For the proof of \eqref{4.18} we require the following residue.

\begin{lemma}\label{lem:4.6}
For any $m\in{\mathbb N}$ we have
\begin{equation}\label{4.19}
{\rm Res}(\psi(z)^2-\psi'(z),-m+1) = 2(\gamma-H_{m-1}).
\end{equation}
\end{lemma}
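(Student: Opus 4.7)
The plan is to compute the first few terms of the Laurent expansion of $\psi(z)$ at its simple pole $z=-(m-1)$, then read off the coefficient of $(z+m-1)^{-1}$ in the combination $\psi(z)^2-\psi'(z)$.

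First I would use the recurrence $\psi(z+1)=\psi(z)+1/z$, iterated $m$ times, to write
\[
\psi(z)=\psi(z+m)-\sum_{j=0}^{m-1}\frac{1}{z+j},
\]
so that near $z=-(m-1)$ the only singular contribution is the $j=m-1$ term. Setting $u=z+m-1$, this yields at once an expansion of the form
\[
\psi(z)=-\frac{1}{u}+a_0+O(u),
\]
and a short calculation using $\psi(1)=-\gamma$, together with a reindexing of the remaining finite sum as a truncated harmonic series, identifies the constant term as $a_0=H_{m-1}-\gamma$.

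From this point the rest is essentially automatic. Squaring the expansion gives
\[
\psi(z)^2=\frac{1}{u^2}-\frac{2a_0}{u}+O(1),
\]
so the residue of $\psi(z)^2$ at $z=-(m-1)$ is $-2a_0=2(\gamma-H_{m-1})$. Differentiating the Laurent expansion of $\psi$ termwise gives $\psi'(z)=1/u^2+O(1)$, which has no $1/u$ coefficient and therefore contributes nothing to the residue. Subtracting produces the stated value $2(\gamma-H_{m-1})$.

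I do not really expect a substantive obstacle here: the computation is driven entirely by having an explicit local expansion of $\psi$ at a non-positive integer. The only points that deserve a moment's attention are the sign bookkeeping in the identification of $a_0$ (easy to get wrong by one), and the observation that $\psi'$ has residue zero at its double pole, which simply reflects the fact that $\psi$ itself has a simple rather than a higher-order pole there.
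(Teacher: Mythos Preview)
Your proof is correct and follows essentially the same approach as the paper: both isolate the singular part of $\psi$ at $1-m$ via the iterated recurrence $\psi(z)=\psi(z+m)-\sum_{j=0}^{m-1}(z+j)^{-1}$, identify the constant term as $H_{m-1}-\gamma$, and read off the residue of $\psi^2-\psi'$. The only cosmetic difference is that the paper writes $\psi(z)=-\tfrac{1}{z+m-1}+g(z)$ and computes $\psi^2-\psi'=-\tfrac{2g(z)}{z+m-1}+g(z)^2-g'(z)$ in one stroke, whereas you handle the residues of $\psi^2$ and $\psi'$ separately; the content is the same.
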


\begin{proof}
Since $\psi(z)$ has poles of order 1 with residue $-1$ at all nonpositive 
integers, we can write for a fixed $m\in{\mathbb N}$,
\begin{equation}\label{4.20}
\psi(z) = -\frac{1}{z+m-1}+g(z),
\end{equation}
with $g(z)$ analytic at $1-m$. This implies
\[
\psi(z)^2-\psi'(z) = -\frac{2g(z)}{z+m-1}+g(z)^2-g'(z),
\]
and consequently,
\begin{equation}\label{4.21}
{\rm Res}(\psi(z)^2-\psi'(z),1-m) = -2g(1-m).
\end{equation}
Next, iterating the functional equation $\psi(z+1)=\frac{1}{z}+\psi(z)$ (see,
e.g., \cite[eq.~(5.5.2)]{DLMF}), we get
\[
\psi(z+m) = \sum_{j=0}^{m-1}\frac{1}{z+j} + \psi(z),
\]
and with \eqref{4.20},
\[
g(z)=\psi(z+m) - \sum_{j=0}^{m-2}\frac{1}{z+j}.
\]
Finally, with $z=1-m$ we get
\[
g(1-m)=\psi(1) - \sum_{j=0}^{m-2}\frac{-1}{m-1-j} = -\gamma+H_{m-1},
\]
where we have used the well-known evaluation $\psi(1)=-\gamma$, which is also a
special case of \eqref{3.5}. The desired identity \eqref{4.19} now follows
from \eqref{4.21}.
\end{proof}

\begin{proof}[Proof of Corollary~\ref{cor:4.5}]
We fix $m\in{\mathbb{N}}$, and in \eqref{4.15} we set $z=-2\mu$, with $\mu$ in
a neighborhood of $m$. Then the right-hand side of \eqref{4.15} becomes
\begin{equation}\label{4.22}
\frac{\sqrt{\pi}}{16}\Gamma(\tfrac{1}{2}-\mu)\left(-2\psi(\tfrac{1}{2}-\mu)
\frac{\psi(1-\mu)}{\Gamma(1-\mu)}
+\frac{\psi(1-\mu)^2-\psi'(1-\mu)}{\Gamma(1-\mu)}\right).
\end{equation}
Now we take the limit as $\mu\to m$. We first use the reflection identity
\[
\psi(1-z) = \psi(z)+\pi\cot(\pi z)
\]
(see, e.g., \cite[eq.~(5.5.4)]{DLMF}) with $z=m+\frac{1}{2}$, to obtain 
\begin{equation}\label{4.23}
\psi(\tfrac{1}{2}-m)=\psi(\tfrac{1}{2}+m) = 2H_{2m}-H_m-2\log{2}-\gamma,
\end{equation}
where we have used \eqref{3.5}. Next, we use again 
${\rm Res}(\Gamma, 1-m)=(-1)^{m-1}/(m-1)!$, to obtain from \eqref{4.19},
\begin{equation}\label{4.24}
\lim_{\mu\to m}\frac{\psi(1-\mu)^2-\psi'(1-\mu)}{\Gamma(1-\mu)}
= (-1)^{m-1}2(m-1)!\left(\gamma-H_{m-1}\right).
\end{equation}
Finally, we substitute \eqref{4.23} and \eqref{4.24}, as well as \eqref{4.13}
and \eqref{4.14}, into \eqref{4.22} after the limit is taken. A straightforward
manipulation then gives the desired identity \eqref{4.18}.
\end{proof}

\section{Further extensions}\label{sec:5}

Considering Corollaries~\ref{cor:4.4} and~\ref{cor:4.5} and their proofs, it is
clear that it would become increasingly complicated to obtain expressions for
infinite series of the type \eqref{4.15} with higher powers of $2k+1+z$ in the
denominator. However, using Bell polynomials, it is in fact
possible to obtain identities involving arbitrary powers of $2k+1+z$ in the
denominator. 

The Bell polynomials belong to the most important objects in enumerative
combinatorics. Given a sequence $x_1, x_2,\ldots$, the polynomials
$B_n(x_1,\ldots,x_n)$, $n\geq 0$, are usually defined by their generating 
function
\begin{equation}\label{5.1}
\exp\left(\sum_{k=1}^\infty x_k\frac{z^k}{k!}\right)
=\sum_{n=0}^\infty B_n(x_1,\ldots,x_n)\frac{z^n}{n!};
\end{equation}
see, e.g., \cite[p.~133ff.]{Co}. These polynomials are also known as complete
exponential Bell polynomials. Among numerous other properties, they have the
explicit expansion
\begin{equation}\label{5.2}
B_n(x_1,\ldots,x_n) = n!\sum\frac{1}{j_1j_2!\cdots j_n!}
\left(\frac{x_1}{1!}\right)^{j_1}\left(\frac{x_2}{2!}\right)^{j_2}\cdots
\left(\frac{x_n}{n!}\right)^{j_n},
\end{equation}
where the sum is taken over all non-negative integers $j_1,\ldots,j_n$ that
satisfy $j_1+2j_2+\cdots+nj_n=n$. The Bell polynomials also satisfy the 
recurrence relation $B_0=1$ and for $n\geq 0$,
\begin{equation}\label{5.3}
B_{n+1}(x_1,\ldots,x_{n+1}) 
= \sum_{i=0}^n\binom{n}{i}B_{n-i}(x_1,\ldots,x_{n-i})x_{i+1}.
\end{equation}
Using \eqref{5.2} or \eqref{5.3}, we can easily compute the first few Bell
polynomials, displayed in Table~1. A more extensive table can be found, e.g.,
in \cite[p.~307]{Co}.

\bigskip
\begin{center}
{\renewcommand{\arraystretch}{1.2}
\begin{tabular}{|r|l|}
\hline
$n$ & $B_n(x_1,\ldots,x_n)$ \\
\hline
0 & $1$ \\
1 & $x_1$ \\
2 & $x_1^2+x_2$ \\
3 & $x_1^3+3x_1x_2+x_3$ \\
4 & $x_1^4+6x_1^2x_2+4x_1x_3+3x_2^2+x_4$ \\
5 & $x_1^5+10x_1^3x_2+15x_1x_2^2+10x_1^2x_3+10x_2x_3+5x_1x_4+x_5$ \\
\hline
\end{tabular}}

\medskip
{\bf Table~1}: Bell polynomials $B_n(x_1,\ldots,x_n)$ for $0\leq n\leq 5$.
\end{center}

\bigskip
We also require the polygamma function, defined for integer orders $m\geq 0$ by
\begin{equation}\label{5.4}
\psi^{(m)}(z) = \frac{d^m}{dz^m}\psi(z)=\frac{d^{m+1}}{dz^{m+1}}\log{\Gamma(z)}.
\end{equation}
We are now ready to state and prove the main result of this section, which 
extends the identities \eqref{2.2}, \eqref{4.1}, and \eqref{4.15}.

\begin{theorem}\label{thm:5.1}
For any $z\in{\mathbb C}\setminus\{-1,-3,-5,\ldots\}$ and any integer 
$p\geq 0$ we have
\begin{equation}\label{5.5}
\sum_{k=0}^\infty\frac{\binom{2k}{k}}{4^k(2k+1+z)^{p+1}}
=\frac{(-1)^p\sqrt{\pi}}{2p!}
\cdot\frac{\Gamma(\frac{z+1}{2})}{\Gamma(\frac{z+2}{2})}
\cdot B_{p}(g^{(1)}(z),\ldots,g^{(p)}(z)),
\end{equation}
where for $k\geq 1$,
\begin{equation}\label{5.6}
g^{(k)}(z)
= 2^{-k}\left(\psi^{(k-1)}(\tfrac{z+1}{2})-\psi^{(k-1)}(\tfrac{z+2}{2})\right).
\end{equation}
\end{theorem}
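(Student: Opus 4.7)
The plan is to derive \eqref{5.5} by differentiating the base identity \eqref{2.2} from Theorem~\ref{thm:2.1} a total of $p$ times with respect to $z$, applying the complete exponential Bell polynomials on the right-hand side via the standard generating function identity \eqref{5.1}.

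First I would differentiate the left-hand side of \eqref{2.2} term by term. Since
\[
\frac{d^p}{dz^p}\frac{1}{2k+1+z}=\frac{(-1)^p p!}{(2k+1+z)^{p+1}},
\]
this yields $(-1)^p p!$ times the series on the left of \eqref{5.5}. The justification for termwise differentiation follows from the asymptotic bound \eqref{2.5}: on any compact subset $K$ of ${\mathbb C}\setminus\{-1,-3,-5,\ldots\}$, each differentiated series is dominated by a convergent series of the form $\sum_k k^{-1/2}|2k+1+z|^{-(p+1)}$, so convergence is uniform on $K$ and termwise differentiation is legitimate.

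Next I would handle the right-hand side. Set
\[
F(z):=\frac{\Gamma(\tfrac{z+1}{2})}{\Gamma(\tfrac{z+2}{2})},\qquad
h(z):=\log\Gamma(\tfrac{z+1}{2})-\log\Gamma(\tfrac{z+2}{2}),
\]
so that $F(z)=e^{h(z)}$ locally. By the chain rule together with definition \eqref{5.4},
\[
h^{(k)}(z)=2^{-k}\bigl(\psi^{(k-1)}(\tfrac{z+1}{2})-\psi^{(k-1)}(\tfrac{z+2}{2})\bigr)=g^{(k)}(z)\qquad(k\geq 1).
\]
The key observation is that \eqref{5.1}, applied with $x_k=h^{(k)}(z_0)$ and compared coefficientwise to the Taylor expansion of $e^{h(z_0+t)}/e^{h(z_0)}$, yields the classical Faà di Bruno consequence
\[
\frac{d^p}{dz^p}e^{h(z)}=e^{h(z)}\,B_p\bigl(h^{(1)}(z),\ldots,h^{(p)}(z)\bigr).
\]
Applied to $F$, this gives
\[
F^{(p)}(z)=F(z)\,B_p\bigl(g^{(1)}(z),\ldots,g^{(p)}(z)\bigr).
\]

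Combining the two sides of the differentiated \eqref{2.2}, I obtain
\[
(-1)^p p!\sum_{k=0}^\infty\frac{\binom{2k}{k}}{4^k(2k+1+z)^{p+1}}
=\frac{\sqrt{\pi}}{2}\,F(z)\,B_p(g^{(1)}(z),\ldots,g^{(p)}(z)),
\]
which upon dividing by $(-1)^p p!$ is exactly \eqref{5.5}. The identity is first established on any simply connected open subset of ${\mathbb C}\setminus\{-1,-3,-5,\ldots\}$ avoiding zeros of $F$ (so that $h=\log F$ is well defined), and then extends by analytic continuation to the entire domain, since both sides are meromorphic with the same pole structure.

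The main obstacle is essentially bookkeeping: verifying that the local logarithmic representation $F=e^h$ is valid and that the Bell polynomial composition formula gives $F^{(p)}$ the stated form. Once this is in hand, the identity \eqref{5.5} falls out immediately, and the formulas \eqref{2.2}, \eqref{4.1}, and \eqref{4.15} are recovered as the cases $p=0,1,2$ upon reading off $B_0=1$, $B_1(x_1)=x_1$, and $B_2(x_1,x_2)=x_1^2+x_2$ from Table~1.
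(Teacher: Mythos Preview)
Your proposal is correct and follows essentially the same approach as the paper: both arguments compute the $p$th derivative of the base identity \eqref{2.2} and identify the derivative of $\Gamma(\tfrac{z+1}{2})/\Gamma(\tfrac{z+2}{2})=e^{h(z)}$ via the Bell polynomial generating function \eqref{5.1}. The paper introduces an auxiliary variable $w$, expands $f(z+w)$ as a Taylor series in $w$, and reads off the $p$th coefficient, whereas you differentiate directly in $z$; this is a cosmetic difference only, and your explicit caveat about working away from the zeros of $F$ (the even negative integers) before continuing analytically is in fact a point the paper passes over silently.
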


\begin{proof}
We fix $z\in{\mathbb C}\setminus\{-1,-3,-5,\ldots\}$ and for a complex variable
$w$ we define
\begin{equation}\label{5.7}
f(z+w):=2\sum_{k=0}^\infty\frac{\binom{2k}{k}}{4^k(2k+1+z+w)}
=\sqrt{\pi}\cdot\frac{\Gamma(\frac{z+w+1}{2})}{\Gamma(\frac{z+w+2}{2})},
\end{equation}
which is holomorphic in the domain $D:=\{w\mid w+z\not\in{\mathbb Z}_{<0}\}$.
We recall that the second equation in \eqref{5.7} comes from \eqref{2.2}.
Next we define
\begin{equation}\label{5.8}
g(z+w):=\log f(z+w)=\log\sqrt{\pi}+\log\Gamma(\frac{z+w+1}{2})
+\log\Gamma(\frac{z+w+2}{2}),
\end{equation}
and by \eqref{5.4} we have for $k\geq 1$,
\begin{equation}\label{5.9}
g^{(k)}(z)
=2^{-k}\left(\psi^{(k-1)}(\tfrac{z+1}{2})-\psi^{(k-1)}(\tfrac{z+2}{2})\right).
\end{equation}
Expanding $g(z+w)$ as a function of $w$ in a neighborhood of $0\in D$, we get 
\[
g(z+w) = g(z)+\sum_{k=1}^\infty g^{(k)}(z)\frac{w^k}{k!},
\]
and with \eqref{5.1} and \eqref{5.8} we have
\begin{align}
f(z+w)&=\exp(g(z+w))
=\sqrt{\pi}\frac{\Gamma(\frac{z+1}{2})}{\Gamma(\frac{z+2}{2})}
\cdot\exp\left(\sum_{k=1}^\infty g^{(k)}(z)\frac{w^k}{k!}\right)\label{5.10}\\
&=\sqrt{\pi}\frac{\Gamma(\frac{z+1}{2})}{\Gamma(\frac{z+2}{2})}
\sum_{n=0}^\infty B_n(g^{(1)}(z),\ldots,g^{(n)}(z))\frac{w^n}{n!}.\nonumber
\end{align}
Taking the $p$th derivative of the right-most term in \eqref{5.10} and of the
series in \eqref{5.7}, where $p$ is a non-negative integer, we obtain
\begin{equation}\label{5.11}
\sum_{k=0}^\infty\frac{\binom{2k}{k}(-1)^p p!}{4^k(2k+1+z)^{p+1}}
=\frac{\sqrt{\pi}\Gamma(\frac{z+1}{2})}{2\Gamma(\frac{z+2}{2})}
\sum_{n=p}^\infty B_n(g^{(1)}(z),\ldots,g^{(n)}(z))\frac{w^{n-p}}{(n-p)!}.
\end{equation}
Finally, with $w=0$ in \eqref{5.11} we get \eqref{5.5}, which completes the
proof.
\end{proof}

Since $B_0=1$ and $B_1(g^{(1)}(z))=g^{(1)}(z)$, we immediately recover the
identities \eqref{2.2} and \eqref{4.1} from Theorem~\ref{thm:5.1}. Furthermore,
with Table~1 and \eqref{5.6} we see that
\[
B_2(g^{(1)}(z),g^{(2)}(z))=
\frac{1}{4}\left(\psi(\tfrac{z+1}{2})-\psi(\tfrac{z+2}{2})\right)^2
+\frac{1}{4}\left(\psi'(\tfrac{z+1}{2})-\psi'(\tfrac{z+2}{2})\right),
\]
and therefore \eqref{5.5} with $p=2$ gives \eqref{4.15}.

We now consider the special case where $z$ is a non-negative integer. As 
before, we need to distinguish between even and odd integers. For the following
result we require the generalized harmonic numbers defined for integers
$k\geq 1$ by
\begin{equation}\label{5.12}
H_0^{(k)}:=1\quad\hbox{and}\quad H_n^{(k)}:=\sum_{j=1}^n\frac{1}{j^k},\quad 
n=1, 2, 3,\ldots.
\end{equation}

\begin{lemma}\label{lem:5.2}
With $g^{(k)}(z)$ as defined in \eqref{5.6}, we have
\begin{align}
g^{(1)}(2m) &= H_{2m}-H_m-\log{2}\quad (m\geq 0),\label{5.13} \\
g^{(1)}(2m-1) &= -\left(H_{2m}-H_m-\log{2}\right)-\frac{1}{2m}\quad (m\geq 1),
\label{5.14}
\end{align}
and for $k\geq 2$,
\begin{align}
\frac{(-1)^kg^{(k)}(2m)}{(k-1)!}&=(1-2^{1-k})\zeta(k)
+2^{1-k}H_m^{(k)}-H_{2m}^{(k)},\label{5.15}\\
\frac{(-1)^kg^{(k)}(2m-1)}{(k-1)!}&=(2^{1-k}-1)\zeta(k)
-2^{1-k}H_m^{(k)}+H_{2m}^{(k)}+\frac{1}{(2m)^k},
\label{5.16}
\end{align}
valid for $m\geq 0$, resp.\ $m\geq 1$, where $\zeta(k)$ is the Riemann zeta 
function.
\end{lemma}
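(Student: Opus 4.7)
The plan is a direct evaluation of \eqref{5.6} at the indicated arguments. Setting $z=2m$ gives the pair $(m+\tfrac{1}{2},m+1)$, while $z=2m-1$ gives $(m,m+\tfrac{1}{2})$, so the task reduces to writing $\psi^{(k-1)}$ at these three types of points in closed form.

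For $k=1$, both \eqref{5.13} and \eqref{5.14} follow at once from the identities in \eqref{3.5}, together with the recurrence $\psi(m)=\psi(m+1)-1/m$; the stray $1/(2m)$ in \eqref{5.14} and the overall sign flip come precisely from swapping the roles of $m+1$ and $m$ in the difference.

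For $k\geq 2$, I would invoke the standard series representation (see, e.g., \cite[eq.~5.15.1]{DLMF})
\[
\psi^{(k-1)}(z)=(-1)^k(k-1)!\sum_{j=0}^\infty\frac{1}{(z+j)^k},
\]
applied at $z=m+1$, $z=m$, and $z=m+\tfrac{1}{2}$. The first two give $(-1)^k(k-1)!(\zeta(k)-H_m^{(k)})$ and $(-1)^k(k-1)!(\zeta(k)-H_m^{(k)}+1/m^k)$. For $z=m+\tfrac{1}{2}$ a factor of $2^k$ comes out of the denominators, leaving a tail sum over odd integers. The one algebraic identity needed, obtained by splitting $\{1,\ldots,2m\}$ into odd and even parts, is
\[
\sum_{j=0}^{m-1}\frac{1}{(2j+1)^k}=H_{2m}^{(k)}-2^{-k}H_m^{(k)},
\]
which yields $\sum_{j=m}^\infty(2j+1)^{-k}=(1-2^{-k})\zeta(k)-H_{2m}^{(k)}+2^{-k}H_m^{(k)}$, and hence the evaluation of $\psi^{(k-1)}(m+\tfrac{1}{2})$.

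Substituting these three values into \eqref{5.6}, pulling through the factor $2^{-k}$, and collecting terms gives \eqref{5.15} and \eqref{5.16}; in the latter, the extra $1/(2m)^k$ is exactly the $2^{-k}\cdot m^{-k}$ contribution that distinguishes $\psi^{(k-1)}(m)$ from $\psi^{(k-1)}(m+1)$. The calculation is pure bookkeeping, and the only mild obstacle is keeping the signs and the powers of $2$ aligned when combining the $\zeta(k)$, $H_m^{(k)}$, and $H_{2m}^{(k)}$ contributions.
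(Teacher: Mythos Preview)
Your proposal is correct and follows essentially the same route as the paper: the paper also evaluates \eqref{5.6} directly, using \eqref{3.5} for $k=1$ and, for $k\geq 2$, the series representation of $\psi^{(k-1)}$ (phrased there via the Hurwitz zeta function $\zeta(k,z)$, which is your sum $\sum_{j\geq 0}(z+j)^{-k}$) together with the same odd/even splitting identity $\sum_{j=0}^{m-1}(2j+1)^{-k}=H_{2m}^{(k)}-2^{-k}H_m^{(k)}$. The only difference is notational.
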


\begin{proof}
By \eqref{5.6} and \eqref{3.5} we have
\begin{align*}
g^{(1)}(2m)&=\frac{1}{2}\left(\psi(m+\tfrac{1}{2})-\psi(m+1)\right)
=\frac{1}{2}\left(2H_{2m}-2H_m-2\log{2}\right),\\
g^{(1)}(2m-1)&=\frac{1}{2}\left(\psi(m)-\psi(m+\tfrac{1}{2})\right)
=\frac{1}{2}\left(H_{m-1}+H_m-2H_{2m}+2\log{2}\right),
\end{align*}
which give \eqref{5.13} and \eqref{5.14}, respectively.

For $k\geq 2$, we first note that by \cite[eq.~(25.11.12)]{DLMF} we have
\begin{equation}\label{5.17}
\psi^{(k-1)}(z)=(-1)^k(k-1)!\zeta(k,z),
\end{equation}
where $\zeta(s,a)$ is the Hurwitz zeta function with complex parameter $a$,
defined by
\begin{equation}\label{5.18}
\zeta(s,a) = \sum_{n=0}^\infty\frac{1}{(n+a)^s},\quad \rm{Re}(s)>1, 
a\neq 0, -1, -2,\cdots
\end{equation}
With \eqref{5.17} we can now rewrite \eqref{5.6} as
\begin{equation}\label{5.19}
g^{(k)}(z)=\left(\tfrac{-1}{2}\right)^k(k-1)!
\left(\zeta(k,\tfrac{z+1}{2})-\zeta(k,\tfrac{z+2}{2})\right)\quad (k\geq 2).
\end{equation}
Using the definitions \eqref{5.18} and \eqref{5.12}, we see that for integers
$m\geq 0$,
\begin{equation}\label{5.20}
\zeta(k,m+1) = \zeta(k)-H_m^{(k)}.
\end{equation}
Similarly,
\begin{equation}\label{5.21}
\zeta(k,m+\tfrac{1}{2}) 
= \zeta(k,\tfrac{1}{2})-\sum_{n=1}^m\frac{1}{(n-\tfrac{1}{2})^k}
= (2^k-1)\zeta(k)-\sum_{n=1}^m\frac{1}{(n-\tfrac{1}{2})^k},
\end{equation}
where we have used the identity (25.11.11) in \cite{DLMF}. The sum in 
\eqref{5.21} can be expressed in terms of harmonic numbers:
\begin{align}
\sum_{n=1}^m\frac{1}{(n-\tfrac{1}{2})^k}&=2^k\sum_{n=1}^m\frac{1}{(2n-1)^k}
=2^k\left(\sum_{n=1}^{2m}\frac{1}{n^k}-\sum_{n=1}^m\frac{1}{(2n)^k}\right)
\label{5.22}\\
&= 2^kH_{2m}^{(k)}-H_m^{(k)}.\nonumber
\end{align}
Combining this with \eqref{5.21} and \eqref{5.20}, we get
\begin{equation}\label{5.23}
\zeta(k,m+\tfrac{1}{2})-\zeta(k,m+1)
= (2^k-2)\zeta(k) + 2H_m^{(k)}-2^kH_{2m}^{(k)}.
\end{equation}
Finally, combining \eqref{5.23} with \eqref{5.19} for $z=2m$, we immediately
get \eqref{5.15}. The identity \eqref{5.16} is similarly obtained by 
combining \eqref{5.19}--\eqref{5.22}, this time for $z=2m-1$. The proof is now
complete.
\end{proof}

For further use, we evaluate the following small cases.

\begin{corollary}\label{cor:5.3}
With $g^{(k)}(z)$ as defined in \eqref{5.6}, we have for integers $k\geq 2$,
\begin{align}
g^{(1)}(0)=-\log{2},\quad
&g^{(k)}(0)=(-1)^k(k-1)!\left(1-2^{1-k}\right)\zeta(k),\label{5.24} \\
g^{(1)}(1)=-1+\log{2},\quad
&g^{(k)}(1)=(-1)^k(k-1)!\left(1-\left(1-2^{1-k}\right)\zeta(k)\right).\label{5.25}
\end{align}
\end{corollary}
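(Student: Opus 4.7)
The plan is to obtain all four identities by direct substitution of $m=0$ and $m=1$ into the general formulas of Lemma~\ref{lem:5.2}, followed by simplification with the small-index harmonic number values.

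For the $z=0$ identities I would take $m=0$ in \eqref{5.13} and \eqref{5.15}. With the convention $H_0 = H_0^{(k)} = 0$ inherited from the empty-sum interpretation, equation \eqref{5.13} collapses at once to $g^{(1)}(0) = -\log 2$, while the right-hand side of \eqref{5.15} reduces to $(1 - 2^{1-k})\zeta(k)$; multiplying through by $(-1)^k(k-1)!$ gives the stated expression for $g^{(k)}(0)$.

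For the $z=1$ identities I would take $m=1$ in \eqref{5.14} and \eqref{5.16}. Using $H_1 = 1$ and $H_2 = \tfrac{3}{2}$, equation \eqref{5.14} yields $g^{(1)}(1) = -(\tfrac{3}{2} - 1 - \log 2) - \tfrac{1}{2} = -1 + \log 2$. For $k \geq 2$, the values $H_1^{(k)} = 1$ and $H_2^{(k)} = 1 + 2^{-k}$ turn the right-hand side of \eqref{5.16} into
\[
(2^{1-k} - 1)\zeta(k) - 2^{1-k} + 1 + 2^{-k} + 2^{-k},
\]
and the combination $-2^{1-k} + 2\cdot 2^{-k} = 0$ leaves $(2^{1-k}-1)\zeta(k) + 1$. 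Multiplying by $(-1)^k(k-1)!$ and rewriting $(2^{1-k}-1)\zeta(k) + 1 = 1 - (1 - 2^{1-k})\zeta(k)$ gives the claimed formula.

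There is no real obstacle here; the only subtlety is the correct pairing of the $2^{-k}$ and $2^{1-k}$ contributions in the $z=1$ case so that the compact form of the answer emerges. As a sanity check, one could bypass Lemma~\ref{lem:5.2} entirely and derive the four identities directly from \eqref{5.6} and \eqref{5.17}, using the Hurwitz zeta evaluations $\zeta(k,1)=\zeta(k)$, $\zeta(k,\tfrac{1}{2})=(2^k-1)\zeta(k)$, and $\zeta(k,\tfrac{3}{2})=\zeta(k,\tfrac{1}{2})-2^k$; this route avoids any ambiguity about the conventions for $H_0$ and $H_0^{(k)}$.
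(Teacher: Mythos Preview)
Your proposal is correct and matches the paper's intended approach: the paper presents Corollary~\ref{cor:5.3} without proof, as an immediate specialization of Lemma~\ref{lem:5.2} to $m=0$ and $m=1$. Your caution about the convention $H_0=H_0^{(k)}=0$ is well placed, since the paper's stated definitions $H_0:=1$ and $H_0^{(k)}:=1$ in \eqref{3.2} and \eqref{5.12} are evidently typos (the derivations of \eqref{3.5} and \eqref{5.20} both force the empty-sum value $0$), and your alternative route via the Hurwitz zeta values sidesteps the issue cleanly.
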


As is well known, Euler's formula allows us to write $\zeta(k)$ as a rational
multiple of $\pi^k$ when $k$ is even; see, e.g., \cite[eq.~25.6.2]{DLMF}. In
particular,
\begin{equation}\label{5.26}
\zeta(2)=\frac{\pi^2}{6},\quad \zeta(4)=\frac{\pi^4}{90},\quad
\zeta(6)=\frac{\pi^6}{945}.
\end{equation}

To obtain some specific series evaluations, we first consider \eqref{5.5}
with $z=0$:
\begin{equation}\label{5.27}
\sum_{k=0}^\infty\frac{\binom{2k}{k}}{4^k(2k+1)^{p+1}}
=\frac{(-1)^p\pi}{2p!}\cdot B_{p}(g^{(1)}(0),\ldots,g^{(p)}(0)).
\end{equation}
Then with $p=0,1,2$ and using \eqref{5.24}, we recover the identities
\eqref{1.1}, \eqref{4.4}, and \eqref{4.16}, respectively. Similarly, the cases
$p=3$ and $p=4$ lead to the following identities.

\begin{corollary}\label{cor:5.4}
\begin{align}
\sum_{k=0}^\infty\frac{\binom{2k}{k}}{4^k(2k+1)^4}
&=\frac{\pi\log^3{2}}{12}+\frac{\pi^3\log{2}}{48}+\frac{\pi\zeta(3)}{8},\label{5.28}\\
\sum_{k=0}^\infty\frac{\binom{2k}{k}}{4^k(2k+1)^5}
&=\frac{\pi\log^4{2}}{48}+\frac{\pi^3\log^2{2}}{96}+\frac{\pi\zeta(3)\log{2}}{8}
+\frac{19\pi^5}{11520}.\label{5.29}
\end{align}
\end{corollary}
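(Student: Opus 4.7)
The plan is to apply Theorem~\ref{thm:5.1} at $z=0$, specialized to $p=3$ and $p=4$, and then carry out the resulting arithmetic using the explicit values in Corollary~\ref{cor:5.3} together with Euler's evaluations \eqref{5.26}. Since \eqref{5.27} is already set up, the task reduces to computing $B_3(g^{(1)}(0),g^{(2)}(0),g^{(3)}(0))$ and $B_4(g^{(1)}(0),\ldots,g^{(4)}(0))$ from Table~1 and multiplying by the appropriate prefactor $(-1)^p\pi/(2\,p!)$.

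First I would tabulate the relevant values from Corollary~\ref{cor:5.3} and \eqref{5.26}:
\begin{align*}
g^{(1)}(0) &= -\log 2, & g^{(2)}(0) &= \tfrac{1}{2}\zeta(2) = \tfrac{\pi^2}{12},\\
g^{(3)}(0) &= -2\cdot\tfrac{3}{4}\zeta(3) = -\tfrac{3}{2}\zeta(3),
& g^{(4)}(0) &= 6\cdot\tfrac{7}{8}\zeta(4) = \tfrac{7\pi^4}{120}.
\end{align*}
For \eqref{5.28} I would then read off from Table~1 that
\[
B_3(x_1,x_2,x_3)=x_1^3+3x_1x_2+x_3,
\]
substitute the values above, and multiply by $(-1)^3\pi/(2\cdot 3!) = -\pi/12$; the signs conveniently line up so that each of the three terms in the resulting expression emerges positive, yielding exactly $\pi\log^3 2/12+\pi^3\log 2/48+\pi\zeta(3)/8$.

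For \eqref{5.29} I would use $B_4(x_1,x_2,x_3,x_4)=x_1^4+6x_1^2x_2+4x_1x_3+3x_2^2+x_4$ and multiply by $\pi/(2\cdot 4!) = \pi/48$. Four of the five contributions are immediate and match the stated identity term by term; the only place I would be careful is the coefficient of $\pi^5$, which receives contributions from both $3(g^{(2)}(0))^2=\pi^4/48$ and $g^{(4)}(0)=7\pi^4/120$. I would combine these over the common denominator $11520$ (so that $\tfrac{1}{48}\bigl(\tfrac{1}{48}+\tfrac{7}{120}\bigr) = \tfrac{5+14}{11520} = \tfrac{19}{11520}$), which produces the asserted coefficient. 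The only obstacle is this fraction bookkeeping; everything else is a direct substitution into Theorem~\ref{thm:5.1}.
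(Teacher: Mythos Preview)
Your proposal is correct and follows exactly the paper's own approach: applying \eqref{5.27} with $p=3$ and $p=4$, inserting the values from \eqref{5.24} (Corollary~\ref{cor:5.3}) together with \eqref{5.26}, and evaluating the Bell polynomials from Table~1. The arithmetic you outline, including the combination of the $3(g^{(2)}(0))^2$ and $g^{(4)}(0)$ contributions to produce $19/11520$, is accurate.
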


Next, using the fact that $\Gamma(\frac{3}{2})=\frac{1}{2}\sqrt{\pi}$, we get
from \eqref{5.5} with $z=1$,
\begin{equation}\label{5.30}
\sum_{k=0}^\infty\frac{\binom{2k}{k}}{4^k(2k+2)^{p+1}}
=\frac{(-1)^p}{p!}\cdot B_{p}(g^{(1)}(1),\ldots,g^{(p)}(1)).
\end{equation}
Once again we note that the cases $p=0,1,2$, this time with \eqref{5.25}, 
give the identities \eqref{1.1}, \eqref{4.5}, and \eqref{4.17}, respectively.
With $p=3$ and $p=4$ we get the following, leaving $\zeta(k)$ for even
$k$ unchanged this time.

\begin{corollary}\label{cor:5.5}
\begin{align}
\sum_{k=0}^\infty\frac{\binom{2k}{k}}{4^k(2k+2)^4}
&=\left(1-\log{2}+\frac{\log^2{2}}{2}-\frac{\log^3{2}}{6}\right)
-\frac{\zeta(2)}{4}(1-\log{2})-\frac{\zeta(3)}{4},\label{5.31}\\
\sum_{k=0}^\infty\frac{\binom{2k}{k}}{4^k(2k+2)^5}
&=\left(1-\log{2}+\cdots+\frac{\log^4{2}}{4!}\right)
-\frac{\zeta(2)}{4}\left(1-\log{2}+\frac{\log^2{2}}{2}\right)\label{5.32}\\
&\qquad-\frac{\zeta(3)}{4}(1-\log{2})-\frac{7\zeta(4)}{32}+\frac{\zeta(2)^2}{32}.\nonumber
\end{align}
\end{corollary}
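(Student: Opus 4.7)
The plan is to apply Theorem~\ref{thm:5.1} at $z=1$ in the form of equation~\eqref{5.30}, substitute the explicit values of $g^{(k)}(1)$ supplied by Corollary~\ref{cor:5.3}, and then expand the Bell polynomials $B_3$ and $B_4$ using Table~1. No new identities are required; the whole proof is a specialization of already established results, and the work is purely algebraic.

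First I would record the specific values we need from Corollary~\ref{cor:5.3}, namely
\[
g^{(1)}(1)=-1+\log 2,\qquad g^{(2)}(1)=1-\tfrac{1}{2}\zeta(2),
\]
\[
g^{(3)}(1)=-2+\tfrac{3}{2}\zeta(3),\qquad g^{(4)}(1)=6-\tfrac{21}{4}\zeta(4),
\]
together with the Bell polynomials $B_3(x_1,x_2,x_3)=x_1^3+3x_1x_2+x_3$ and $B_4(x_1,x_2,x_3,x_4)=x_1^4+6x_1^2x_2+4x_1x_3+3x_2^2+x_4$ from Table~1. The factor in front of $B_p$ in \eqref{5.30} is $(-1)^p/p!$, so \eqref{5.31} will come out of $-\tfrac{1}{6}B_3$ and \eqref{5.32} out of $\tfrac{1}{24}B_4$.

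Next I would substitute and simplify. Writing $a:=1-\log 2$, so that $g^{(1)}(1)=-a$, the $p=3$ case yields
\[
-\tfrac{1}{6}\bigl(-a^3-3a(1-\tfrac{1}{2}\zeta(2))+(-2+\tfrac{3}{2}\zeta(3))\bigr)
=\tfrac{a^3}{6}+\tfrac{a(1-\zeta(2)/2)}{2}+\tfrac{1}{3}-\tfrac{\zeta(3)}{4},
\]
and expanding $a^3/6=\tfrac{1}{6}-\tfrac{\log 2}{2}+\tfrac{\log^2 2}{2}-\tfrac{\log^3 2}{6}$ and combining the constant, linear in $\log 2$, and $\zeta(2)$ pieces recovers \eqref{5.31} exactly. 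For $p=4$ the analogous substitution gives
\[
\tfrac{1}{24}\bigl(a^4+6a^2(1-\tfrac{1}{2}\zeta(2))-4a(-2+\tfrac{3}{2}\zeta(3))+3(1-\tfrac{1}{2}\zeta(2))^2+(6-\tfrac{21}{4}\zeta(4))\bigr),
\]
where the $a^4/24$ part expands as $\sum_{k=0}^{4}(-\log 2)^k/k!$, producing the truncated exponential displayed in \eqref{5.32}. Grouping the remaining terms by which $\zeta$-value they carry then isolates the coefficients $-\tfrac{\zeta(2)}{4}(1-\log 2+\tfrac{\log^2 2}{2})$, $-\tfrac{\zeta(3)}{4}(1-\log 2)$, $-\tfrac{7\zeta(4)}{32}$, and $+\tfrac{\zeta(2)^2}{32}$.

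The only real obstacle is the bookkeeping for $p=4$: the five monomials in $B_4$ produce a sizeable expression mixing powers of $\log 2$ with $\zeta(2)$, $\zeta(3)$, $\zeta(4)$, and $\zeta(2)^2$, and one must check that the pure constants, the $\log 2$ terms coming from $a^4$ versus those coming from $a^2$ and $a$, and the $\zeta(2)$ pieces coming from $6a^2\cdot(-\tfrac{1}{2}\zeta(2))$ and from $3b^2$, all collect cleanly into the form \eqref{5.32}. This is routine but needs care; the appearance of the partial exponential series $\sum_{k\le p}(-\log 2)^k/k!$ is the structural signal that the computation has been done correctly.
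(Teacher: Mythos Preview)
Your approach is correct and is exactly what the paper does: Corollary~\ref{cor:5.5} is obtained by inserting the values \eqref{5.25} into \eqref{5.30} for $p=3,4$ and expanding $B_3$ and $B_4$ from Table~1. One small slip to fix: $a^4/24$ by itself does \emph{not} expand as $\sum_{k=0}^{4}(-\log 2)^k/k!$ (its constant term is $1/24$, not $1$); the truncated exponential emerges only after you combine the $\log 2$--free contributions from all five monomials of $B_4$, as you in fact recognize in your final paragraph.
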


\noindent
{\bf Remark.} The occurrence of the partial sums of the series
$\sum(-\log{2})^n/n!$ can be explained as follows. Among the partial derivatives
of the Bell polynomials we have
\begin{equation}\label{5.33}
\frac{\partial}{\partial x_1}B_p(x_1,\ldots x_{p-1},x_p) 
= p\cdot B_{p-1}(x_1,\ldots x_{p-1});
\end{equation}
see \cite[p.~266]{Be}. If we replace $\log{2}$ on the right-hand sides of 
\eqref{5.30}--\eqref{5.32} by $x$, recalling that $g^{(1)}(1)=-1+\log{2}$, and
then differentiate with respect to $x$, we get \eqref{5.31} from \eqref{5.32}.

Going in the opposite direction, we can use this derivative argument and the 
fact that $B_{p}(g^{(1)}(1),\ldots,g^{(p)}(1))$ always contains the single 
summand $g^{(p)}(1)$, which is a rational multiple of $\zeta(p)$. In this way 
we can show that the part of the right-hand side of \eqref{5.30} that contains 
no products of zeta values is of the form
\[
L(p)-\sum_{j=2}^p\frac{j-1}{2^j}L(p-j)\zeta(j),\quad\hbox{where}\quad
L(n):=\sum_{i=0}^n\frac{(-\log{2})^i}{i!}.
\]
However, this does not give a general formula for \eqref{5.30}, as can 
already be seen from the right-most term in \eqref{5.32}.

A similar argument can be used in the easier case of \eqref{5.27}, where the 
part that contains no products of zeta values is of the form
\[
\frac{\pi}{2}\cdot\frac{(\log{2})^p}{p!}
+\frac{\pi}{8}\sum_{j=2}^p\frac{(\log{2})^{p-j}}{(p-j)!}\zeta(j).
\]
This is consistent with \eqref{4.4}, \eqref{4.16}, \eqref{5.28}, and
\eqref{5.29}.

\section{Integral representations}\label{sec:6}

As part of the proof of Theorem~\ref{thm:2.1} we have already used an integral
representation in \eqref{2.3} for a class of infinite series. In this section
we will extend this representation. Much of the content of this section is not
new, but we include it for the sake of completeness and for a different 
perspective.

Our point of departure is a remarkable identity due to Boyadzhiev
\cite[eq.~(3.36)]{Bo}, namely
\begin{equation}\label{6.1}
\sum_{n=0}^\infty\binom{2n}{n}a_n\frac{z^n}{4^n}
=\frac{1}{\sqrt{z+1}}\sum_{n=0}^\infty\left(\frac{z}{z+1}\right)^n
\frac{\binom{2n}{n}}{4^n}\sum_{k=0}^n\binom{n}{k}a_k,
\end{equation}
valid for all complex $z$ with $|z|<1$, and where $a_0, a_1,\ldots$ is a 
bounded sequence. Note the occurrence of the terms $\binom{2n}{n}/4^n$ on both
sides. Also, the right-most sum in \eqref{6.1} is the binomial transform of the
sequence $(a_k)$.

We are now ready to state and prove the following result.

\begin{theorem}\label{thm:6.1}
For any integer $p\geq 0$ we have
\begin{equation}\label{6.2}
\sum_{k=0}^\infty\frac{\binom{2k}{k}}{4^k(2k+1)^{p+1}}z^k
=\frac{(-1)^p}{p!}\int_0^1\frac{\log^p{x}}{\sqrt{1-zx^2}}dx.
\end{equation}
\end{theorem}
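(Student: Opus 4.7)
The plan is to write $1/(2k+1)^{p+1}$ as an integral against $(\log x)^p$ and then apply the generating function \eqref{1.1a} inside the integral, reducing the problem to a routine Fubini justification.

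First I would establish the identity
\begin{equation*}
\frac{1}{(2k+1)^{p+1}}=\frac{(-1)^p}{p!}\int_0^1 x^{2k}(\log x)^p\,dx,
\end{equation*}
which follows at once from the substitution $u=-\log x$, turning the integral into $\int_0^\infty u^p e^{-(2k+1)u}du=\Gamma(p+1)/(2k+1)^{p+1}$. This generalises the single integral used in \eqref{2.3} to $p=0$ and is the backbone of the whole argument.

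Substituting this into the left-hand side of \eqref{6.2}, I would interchange summation and integration to obtain
\begin{equation*}
\sum_{k=0}^\infty\frac{\binom{2k}{k}}{4^k(2k+1)^{p+1}}z^k
=\frac{(-1)^p}{p!}\int_0^1(\log x)^p\sum_{k=0}^\infty\binom{2k}{k}\frac{(zx^2/4)^k}{1}\cdot\frac{1}{4^k\cdot(x^{-2k})^{-1}}\,dx,
\end{equation*}
which after grouping $z^k x^{2k}/4^k=(zx^2/4)^k$ reduces, by the binomial series \eqref{1.1a} with $x^2z/4$ in place of $x$, to
\begin{equation*}
\frac{(-1)^p}{p!}\int_0^1\frac{(\log x)^p}{\sqrt{1-zx^2}}\,dx.
\end{equation*}
This yields exactly \eqref{6.2}.

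The only non-trivial step is the interchange of sum and integral. For $|z|<1$ the integrand $1/\sqrt{1-zx^2}$ is bounded on $[0,1]$ by $1/\sqrt{1-|z|}$, and the central binomial estimate \eqref{2.5} shows that $\binom{2k}{k}/4^k=O(k^{-1/2})$, so the series $\sum_k\binom{2k}{k}|z|^k x^{2k}/4^k$ is dominated uniformly on $[0,1]$ (or, equivalently, the partial sums are majorised by $1/\sqrt{1-|z|x^2}$, which is integrable against $|\log x|^p$ on $[0,1]$). Tonelli/Fubini then legitimates the swap. I expect the analytic continuation of \eqref{6.2} to all complex $z$ with $|z|<1$ to require only this domination argument; no harder extension step is needed, since both sides are manifestly holomorphic in $z$ on the unit disk. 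Thus the plan's main obstacle is mild, and the proof reduces to the clean two-line calculation above.
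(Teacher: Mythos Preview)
Your argument is correct, and it is genuinely different from the paper's proof. The paper establishes \eqref{6.2} by first invoking Boyadzhiev's binomial-transform identity \eqref{6.1} with $a_k=(2k+1)^{-(p+1)}$, then using the integral representation \eqref{6.3} for the inner binomial sum $\sum_{k=0}^n\binom{n}{k}(2k+1)^{-(p+1)}$, and only then summing via \eqref{1.1a} and simplifying the resulting square-root expression. Your route is shorter and more elementary: you write $1/(2k+1)^{p+1}$ directly as $\frac{(-1)^p}{p!}\int_0^1 x^{2k}(\log x)^p\,dx$ (this is exactly the paper's \eqref{6.8} with $w=1$, which the paper only introduces later for Theorem~\ref{thm:6.3}), swap sum and integral, and apply \eqref{1.1a} once. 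The Boyadzhiev identity is entirely bypassed. What the paper's approach buys is an illustration of how \eqref{6.1} can be used as a general tool --- the authors remark themselves that \eqref{6.2} is not new and that the point of Section~\ref{sec:6} is to display integral-representation techniques --- whereas your approach shows that for this particular statement no such machinery is needed. Your Fubini justification (uniform bound $1/\sqrt{1-|z|}$ times the integrable weight $|\log x|^p$ on $[0,1]$) is adequate for $|z|<1$.

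One cosmetic point: the displayed line in the middle of your proposal containing $\frac{(zx^2/4)^k}{1}\cdot\frac{1}{4^k\cdot(x^{-2k})^{-1}}$ is garbled; you simply mean $\binom{2k}{k}(zx^2/4)^k$ after grouping, so clean that up before submission.
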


\begin{proof}
We substitute $a_k=(2k+1)^{-p}$ into \eqref{6.1} and use the integral 
representation 
\begin{equation}\label{6.3}
\sum_{k=0}^n\frac{\binom{n}{k}}{(2k+1)^{p+1}}
=\frac{(-1)^p}{p!}\int_0^1(1+x^2)^n\log^p{x}\,dx,
\end{equation}
which is a special case of the identity (4.272.15) in \cite[p.~552]{GR}.
We then get
\begin{align}
\sum_{n=0}^\infty\binom{2n}{n}&\frac{z^n}{4^n(2n+1)^{p+1}} \label{6.4}\\
&=\frac{1}{\sqrt{z+1}}\sum_{n=0}^\infty\left(\frac{z}{z+1}\right)^n
\frac{\binom{2n}{n}}{4^n}\frac{(-1)^p}{p!}\int_0^1\log^p{x}(1+x^2)^ndx\nonumber\\
&=\frac{1}{\sqrt{z+1}}\frac{(-1)^p}{p!}\int_0^1\log^p{x}
\left(\sum_{n=0}^\infty\left(\frac{z}{z+1}\right)^n\frac{\binom{2n}{n}}{4^n}
(1+x^2)^n\right)dx.\nonumber
\end{align}
Since we are dealing with absolutely convergent power series, the interchange
of integral and series is legitimate. The sum in the last integral can be 
evaluated by \eqref{1.1a}, giving
\[
\sum_{n=0}^\infty\binom{2n}{n}\left(\frac{z(1+x^2)}{4(z+1)}\right)^n
=\frac{1}{\sqrt{1-\frac{z}{z+1}(1+x^2)}}.
\]
This, together with \eqref{6.4}, immediately gives \eqref{6.2} after replacing
$n$ by $k$.
\end{proof}

\noindent
{\bf Example.} Let $p=0$ in \eqref{6.2}. If we set $z=y^2$, then
\[
\sum_{k=0}^\infty\frac{\binom{2k}{k}y^{2k}}{4^k(2k+1)}
=\int_0^1\frac{dx}{\sqrt{1-(yx)^2}}
=\frac{1}{y}\int_0^y\frac{du}{\sqrt{1-u^2}} = \frac{\arcsin{y}}{y},
\]
which is a special case of identity (16) in \cite{DV}. In particular, with 
$y=1$ we recover the first identity in \eqref{1.1}.

\medskip
\noindent
{\bf Remarks.}
(1) The identity \eqref{6.2} is not new. In fact, it is a special case of the
integral (4.272.14) in \cite[p.~552]{GR} with $p=1$, $q=2$, $s=1/2$, $r=p+1$, 
and with $x$ replaced by $x\sqrt{-1}$.

(2) On the other hand, if we combine \eqref{6.2} with \eqref{5.27}, then we get
the following apparently new integral evaluation in terms of Bell polynomials.

\begin{corollary}\label{cor:6.2}
For any integer $p\geq 0$ we have
\begin{equation}\label{6.5}
\int_0^1\frac{\log^p{x}}{\sqrt{1-x^2}}dx
= \frac{\pi}{2}\cdot B_p(c_1,\ldots, c_p),
\end{equation}
where $c_1=-\log{2}$ and $c_k=(-1)^k(k-1)!(1-2^{1-k})\zeta(k)$ for $k\geq 2$.
\end{corollary}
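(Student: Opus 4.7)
The plan is essentially the one hinted at in the preceding remark: combine Theorem~\ref{thm:6.1} (specialized to $z=1$) with the Bell-polynomial evaluation \eqref{5.27}, and then identify the arguments of the Bell polynomial using Corollary~\ref{cor:5.3}.

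Concretely, I would first set $z = 1$ in \eqref{6.2} to obtain
\[
\sum_{k=0}^\infty\frac{\binom{2k}{k}}{4^k(2k+1)^{p+1}}
=\frac{(-1)^p}{p!}\int_0^1\frac{\log^p{x}}{\sqrt{1-x^2}}\,dx.
\]
The convergence at $z = 1$ is not a concern, since the central binomial asymptotics \eqref{2.5} guarantee that the series on the left converges for every $p \geq 0$, and the integral on the right is clearly finite (the logarithmic singularity at $0$ is integrable against $1/\sqrt{1-x^2}$).

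Next, I would invoke \eqref{5.27}, which evaluates exactly the same series in closed form:
\[
\sum_{k=0}^\infty\frac{\binom{2k}{k}}{4^k(2k+1)^{p+1}}
=\frac{(-1)^p\pi}{2\,p!}\cdot B_p(g^{(1)}(0),\ldots,g^{(p)}(0)).
\]
Equating the two expressions and cancelling the common factor $(-1)^p/p!$ yields
\[
\int_0^1\frac{\log^p{x}}{\sqrt{1-x^2}}\,dx
=\frac{\pi}{2}\cdot B_p(g^{(1)}(0),\ldots,g^{(p)}(0)).
\]
Finally, by Corollary~\ref{cor:5.3} we have $g^{(1)}(0) = -\log 2 = c_1$ and, for $k \geq 2$, $g^{(k)}(0) = (-1)^k(k-1)!(1-2^{1-k})\zeta(k) = c_k$. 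Substituting these values gives the claimed identity \eqref{6.5}.

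Since every ingredient is already established in the paper, there is essentially no obstacle here; the proof is a two-line consequence of matching the two evaluations of the same series. The only thing that requires a brief mention is that the specialization $z = 1$ in Theorem~\ref{thm:6.1} is legitimate, which follows from the uniform convergence of the series for $z \in [0,1]$ and Abel's theorem (or equivalently, from a direct application of dominated convergence to the integral representation).
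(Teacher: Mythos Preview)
Your proposal is correct and follows exactly the approach indicated in the paper: combine \eqref{6.2} at $z=1$ with \eqref{5.27}, cancel the factor $(-1)^p/p!$, and identify the arguments via \eqref{5.24}. The additional care you take in justifying the specialization $z=1$ is more than the paper provides, but the argument is the same.
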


We now generalize Theorem~\ref{thm:6.1} and its proof by introducing a new
variable. The results are then given in terms of Euler's beta function
$B(a,b)$, defined by \eqref{2.3a}.

\begin{theorem}\label{thm:6.3}
For any $w\in{\mathbb C}\setminus\{0,-2,-4,\ldots\}$ and any integer $p\geq 0$ 
we have
\begin{equation}\label{6.6}
\sum_{k=0}^\infty\frac{\binom{2k}{k}}{4^k(2k+w)^{p+1}}
=\frac{(-1)^p}{2p!}\cdot\frac{d^p}{dw^p}B\big(\frac{1}{2},\frac{w}{2}\big),
\end{equation}
and the generating function
\begin{equation}\label{6.7}
\sum_{p=0}^\infty
\left(\sum_{k=0}^\infty\frac{\binom{2k}{k}}{4^k(2k+w)^{p+1}}\right)z^p
=\frac{1}{2}\cdot B\big(\frac{1}{2},\frac{w-z}{2}\big).
\end{equation}
\end{theorem}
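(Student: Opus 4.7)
The plan is to derive \eqref{6.6} from Theorem~\ref{thm:2.1} by differentiating $p$ times in $w$, and then to recognize \eqref{6.7} as the Taylor expansion in $z$ of the right-hand side of \eqref{6.6} about $z=0$.

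The $p=0$ case of \eqref{6.6} is essentially Theorem~\ref{thm:2.1} in disguise: substituting $z=w-1$ in \eqref{2.2} and using
\[
B\bigl(\tfrac{1}{2},\tfrac{w}{2}\bigr)=\frac{\Gamma(\tfrac{1}{2})\Gamma(\tfrac{w}{2})}{\Gamma(\tfrac{w+1}{2})}=\frac{\sqrt{\pi}\,\Gamma(\tfrac{w}{2})}{\Gamma(\tfrac{w+1}{2})}
\]
rewrites \eqref{2.2} as $\sum_{k\geq 0}\binom{2k}{k}/[4^k(2k+w)]=\tfrac{1}{2}B(\tfrac{1}{2},\tfrac{w}{2})$, valid for $w\notin\{0,-2,-4,\ldots\}$. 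To get the general-$p$ version I would differentiate this identity $p$ times with respect to $w$: since $\tfrac{d^p}{dw^p}\tfrac{1}{2k+w}=\tfrac{(-1)^p p!}{(2k+w)^{p+1}}$, termwise differentiation turns the left-hand side into $(-1)^p p!$ times the series in \eqref{6.6}, and dividing through by $(-1)^p p!$ produces \eqref{6.6}.

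For \eqref{6.7} I would fix $w$ with $\tfrac{w}{2}\notin\{0,-1,-2,\ldots\}$ and Taylor-expand $f(w-z):=B(\tfrac{1}{2},\tfrac{w-z}{2})$ in $z$ about $0$. Using $\tfrac{d^p}{dz^p}f(w-z)\big|_{z=0}=(-1)^p f^{(p)}(w)$ gives
\[
\tfrac{1}{2}B\bigl(\tfrac{1}{2},\tfrac{w-z}{2}\bigr)=\sum_{p=0}^\infty\frac{(-1)^p z^p}{2\,p!}\cdot\frac{d^p}{dw^p}B\bigl(\tfrac{1}{2},\tfrac{w}{2}\bigr),
\]
and inserting \eqref{6.6} into each coefficient yields \eqref{6.7} for $|z|$ smaller than the distance from $w$ to the nearest pole of $f$. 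A one-step direct route is also available: interchange the two sums on the left of \eqref{6.7} via $\sum_{p\geq 0} z^p/(2k+w)^{p+1}=1/(2k+w-z)$ and apply the $p=0$ identity with $w$ replaced by $w-z$.

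The main obstacle is justifying the termwise $w$-differentiation used to pass from the $p=0$ to the general-$p$ case of \eqref{6.6}. This is routine from \eqref{2.5}: on any compact subset $K\subset\mathbb{C}\setminus\{0,-2,-4,\ldots\}$ the bound $\bigl|\binom{2k}{k}/[4^k(2k+w)^{p+1}]\bigr|\leq C_{K,p}\,k^{-3/2-p}$ for large $k$ makes the series of $p$-th derivatives converge uniformly on $K$, so induction on $p$ closes the argument. The same bound legitimizes swapping the two summations in the alternative route to \eqref{6.7}.
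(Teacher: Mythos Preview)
Your argument is correct. It differs, however, from the paper's primary proof: there the authors apply Boyadzhiev's binomial-transform identity \eqref{6.1} with $a_k=(2k+w)^{-p}$, combine it with the integral representation $\frac{1}{(2k+w)^{p+1}}=\frac{(-1)^p}{p!}\int_0^1 x^{2k+w-1}\log^p x\,dx$, sum under the integral to reach $\int_0^1\frac{x^{w-1}\log^p x}{\sqrt{1-x^2}}\,dx$, and then quote a tabulated evaluation of this integral as $\tfrac{1}{2}\tfrac{d^p}{dw^p}B(\tfrac{1}{2},\tfrac{w}{2})$; \eqref{6.7} then follows by Taylor expansion. Your route---rewrite Theorem~\ref{thm:2.1} as the $p=0$ case and differentiate termwise, with \eqref{6.7} obtained either by Taylor expansion or by the geometric-series interchange---is exactly the alternative the authors themselves sketch in Section~\ref{sec:7} (see the paragraph around \eqref{6.10}), and they concede that the integral approach was chosen mainly to showcase the representations rather than for efficiency. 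Your proof is shorter and self-contained, relying only on \eqref{2.2} and the asymptotic \eqref{2.5}; the paper's version has the side benefit of producing the integral identity \eqref{6.9} and Corollary~\ref{cor:6.2} along the way.
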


\begin{proof}
With the aim of applying \eqref{6.1} with $a_k=(2k+w)^{-p}$, we use the 
integral representation
\begin{equation}\label{6.8}
\frac{1}{(2k+w)^{p+1}}
=\frac{(-1)^p}{p!}\int_0^1 x^{2k+w-1}\log^p{x}\,dx,
\end{equation}
which can be found in \cite[p.~551]{GR} as (4.272.6). With a binomial
expansion, \eqref{6.8} then gives
\[
\sum_{k=0}^n\frac{\binom{n}{k}}{(2k+w)^{p+1}}
=\frac{(-1)^p}{p!}\int_0^1 x^{w-1}(1+x^2)^n\log^p{x}\,dx,
\]
which is an extension of \eqref{6.3}. In exactly the same way as in the proof
of Theorem~\ref{thm:6.1}, we then get
\begin{equation}\label{6.9}
\sum_{n=0}^\infty\frac{\binom{2n}{n}}{4^n(2n+w)^{p+1}}z^n
=\frac{(-1)^p}{p!}\int_0^1\frac{x^{w-1}\log^p{x}}{\sqrt{1-zx^2}}dx.
\end{equation}
This obviously extends \eqref{6.2}. The integral on the right, for $z=1$, can 
be found in \cite{PrE} as (2.6.5.1) with $\alpha=w$, $\beta=1/2$, and $\mu=2$, 
namely
\[
\int_0^1\frac{x^{w-1}\log^p{x}}{\sqrt{1-x^2}}dx
=\frac{1}{2}\cdot\frac{d^p}{dw^p}B\big(\frac{1}{2},\frac{w}{2}\big).
\]
This, with \eqref{6.9}, gives \eqref{6.6}. The identity \eqref{6.7} is then
obvious as a Maclaurin expansion in $z$ of the right-hand side of \eqref{6.7}.
\end{proof}

\noindent
{\bf Remarks.} (1) When $p=0$, the identity \eqref{6.6} reduces to \eqref{2.2}
with $w=z+1$ and with the definition \eqref{2.3a}, noting that 
$\Gamma(\frac{1}{2})=\sqrt{\pi}$.

(2) At first sight, the identities \eqref{5.5} and \eqref{6.6} appear to be
entirely different expressions for the same series. However, this becomes 
less surprising if we realize that higher derivatives of composite functions,
as we have in \eqref{6.6}, are closely related to Bell polynomials through
the Fa\'a di Bruno formula; see, e.g., \cite[Sect.~3.4]{Co}.

\section{Further Remarks}\label{sec:7}

{\bf 1.} An alternative approach to Theorem~\ref{thm:6.3} would involve going in
the opposite direction to the proof given above. As mentioned in the previous
Remark (1), Theorem~\ref{thm:2.1} is equivalent to 
\begin{equation}\label{6.10}
\sum_{k=0}^\infty\frac{\binom{2k}{k}}{4^k(2k+1+z)}
=\frac{1}{2}\cdot B\big(\frac{1}{2},\frac{z+1}{2}\big).
\end{equation}
If we replace $z$ by $w-z-1$ and expand
\[
\frac{1}{2k+w-z}=\frac{1}{2k+w}\cdot\frac{1}{1-\frac{z}{2k+w}}
=\frac{1}{2k+w}\sum_{p=0}^\infty\frac{z^p}{(2k+w)^p},
\]
then with \eqref{6.10} we immediately get \eqref{6.7}. The identity \eqref{6.6}
then follows from the uniqueness of Taylor expansions.

However, the point of the original proof was to display some interesting
integral representations and show their usefulness in dealing with infinite 
series such as the ones in this paper.

\medskip
{\bf 2.} Yet another approach to Theorem~\ref{thm:6.3} that does not use 
integral representations is to begin with the series
\begin{equation}\label{6.11}
B(x,y)=\sum_{k=0}^\infty\binom{k-x}{k}\frac{1}{y+k},
\end{equation}
which can be found, for instance, in an equivalent form as identity (8.382.1)
in \cite[p.~909]{GR}. This gives
\begin{equation}\label{6.12}
\frac{1}{2}B\big(\frac{1}{2},\frac{w}{2}\big)
=\sum_{k=0}^\infty\binom{k-\frac{1}{2}}{k}\frac{1}{w+2k}.
\end{equation}
Using the duplication formula \eqref{2.1} with $z=k+\frac{1}{2}$ in the form
\begin{equation}\label{6.13}
\Gamma(k+\tfrac{1}{2})=\frac{\Gamma(2k+1)\sqrt{\pi}}{\Gamma(k+1)4^k}
\end{equation}
and recalling that $\Gamma(\frac{1}{2})=\sqrt{\pi}$, we get
\begin{equation}\label{6.14}
\binom{k-\frac{1}{2}}{k}
=\frac{\Gamma(k+\frac{1}{2})}{\Gamma(\frac{1}{2})\Gamma(k+1)}
=\frac{1}{4^k}\binom{2k}{k}.
\end{equation}
With \eqref{6.12}, this gives \eqref{6.10} and therefore, according to 
Subsection 1 above, it also leads to the desired identity \eqref{6.6}.

\medskip
{\bf 3.} We can also use the identity \eqref{6.11} to obtain variants to many
of the results in this paper. For one such class of variants, we set 
$x=\nu+\frac{1}{2}$ in \eqref{6.11}, where $\nu\geq 0$ is an integer. Then by
iterating the basic recurrence $\Gamma(z+1)=z\Gamma(z)$ and using \eqref{6.14},
we get
\begin{align}
\sum_{k=0}^\infty\frac{\binom{2k}{k}}{4^k(k+z)}
&\cdot\frac{1}{(2k-1)(2k-3)\cdots(2k-2\nu+1)} \label{6.15} \\
&\qquad =(-1)^\nu\frac{2^\nu\nu!}{(2\nu)!}
\cdot\frac{\Gamma(\nu+\frac{1}{2})\Gamma(z)}{\Gamma(\nu+\frac{1}{2}+z)},\nonumber
\end{align}
valid for all $z$ for which the gamma terms are defined. 

First, setting $z=m$ (respectively $z=m+\frac{1}{2}$) and using \eqref{6.13},
we get the following two classes of identities.

\begin{corollary}\label{cor:6.4}
For all integers $\nu\geq 0$ and $m\geq 1$, we have
\begin{align}
\sum_{k=0}^\infty\frac{\binom{2k}{k}}{4^k(2k+2m)}
&\cdot\frac{1}{(2k-1)(2k-3)\cdots(2k-2\nu+1)}\label{6.16}\\
&\qquad=(-1)^\nu 2^{2m+\nu-1}\frac{(m-1)!(m+\nu)!}{(2m+2\nu)!},\nonumber
\end{align}
and for $m\geq 0$,
\begin{align}
\sum_{k=0}^\infty\frac{\binom{2k}{k}}{4^k(2k+2m+1)}
&\cdot\frac{1}{(2k-1)(2k-3)\cdots(2k-2\nu+1)}\label{6.17}\\
&\qquad=\frac{(-1)^\nu}{2^{2m+\nu+1}}\frac{(2m)!\pi}{(m+\nu)!m!}.\nonumber
\end{align}
\end{corollary}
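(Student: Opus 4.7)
The proof proposal is to specialize the identity \eqref{6.15} stated just above the corollary, choosing $z$ appropriately, and then simplify the gamma quotient on the right-hand side via the duplication formula \eqref{6.13} already derived in the paper. The starting observation is that
\[
\frac{1}{k+z} = \frac{2}{2k+2z},
\]
so the left-hand side of \eqref{6.16} (respectively \eqref{6.17}) is exactly one-half of the left-hand side of \eqref{6.15} evaluated at $z=m$ (respectively at $z=m+\tfrac{1}{2}$). Thus the work is entirely on the right-hand side: one computes the two resulting gamma ratios explicitly in terms of factorials.

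For \eqref{6.16}, I would set $z=m$ in \eqref{6.15}, so that $\Gamma(z)=(m-1)!$, and apply \eqref{6.13} to both $\Gamma(\nu+\tfrac{1}{2})$ and $\Gamma(\nu+m+\tfrac{1}{2})$. This produces
\[
\frac{\Gamma(\nu+\tfrac{1}{2})}{\Gamma(\nu+m+\tfrac{1}{2})}
=\frac{(2\nu)!\,(m+\nu)!\,4^{m}}{\nu!\,(2m+2\nu)!},
\]
so the factors $(2\nu)!$ and $\nu!$ cancel with $\frac{2^\nu \nu!}{(2\nu)!}$, leaving $2^\nu\cdot 4^m = 2^{2m+\nu}$ and the desired factorial quotient; dividing by $2$ recovers \eqref{6.16}.

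For \eqref{6.17}, I would set $z=m+\tfrac{1}{2}$ in \eqref{6.15}, so $\Gamma(z)=\Gamma(m+\tfrac{1}{2})$ and $\Gamma(\nu+\tfrac{1}{2}+z)=\Gamma(\nu+m+1)=(\nu+m)!$. Using \eqref{6.13} on both $\Gamma(\nu+\tfrac{1}{2})$ and $\Gamma(m+\tfrac{1}{2})$ introduces a factor of $\pi$ from two copies of $\sqrt{\pi}$, and after the same cancellations I obtain $\frac{(2m)!\,\pi}{m!\,(m+\nu)!\,2^{2m+\nu}}$; the halving from $\tfrac{1}{k+z}=\tfrac{2}{2k+2m+1}$ yields the factor $2^{2m+\nu+1}$ in the denominator of \eqref{6.17}.

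There is essentially no main obstacle: the only thing to watch is the bookkeeping of powers of $2$ (combining $4^\nu$, $4^m$, and the $2^\nu$ from the prefactor of \eqref{6.15}) and making sure the factor of $\tfrac{1}{2}$ coming from $\tfrac{1}{k+z}=\tfrac{2}{2k+2z}$ is correctly absorbed. Both identities then fall out as straightforward specializations of \eqref{6.15}.
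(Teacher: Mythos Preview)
Your proposal is correct and follows essentially the same approach as the paper, which states just before the corollary that one obtains \eqref{6.16} and \eqref{6.17} by ``setting $z=m$ (respectively $z=m+\tfrac{1}{2}$) and using \eqref{6.13}'' in \eqref{6.15}. Your explicit bookkeeping of the factor of $2$ from $\tfrac{1}{k+z}=\tfrac{2}{2k+2z}$ and of the gamma simplifications via \eqref{6.13} fills in exactly the details the paper leaves implicit.
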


{\bf 4.} We can obtain further identities by differentiating \eqref{6.15}. 
In analogy to Corollary~\ref{cor:4.1}, we then get from \eqref{6.15},
\begin{align}
\sum_{k=0}^\infty\frac{\binom{2k}{k}}{4^k(k+z)^2}
&\cdot\frac{1}{(2k-1)(2k-3)\cdots(2k-2\nu+1)}\label{6.18}\\
&=(-1)^\nu\frac{2^\nu\nu!}{(2\nu)!}
\cdot\frac{\Gamma(\nu+\frac{1}{2})\Gamma(z)}{\Gamma(\nu+\frac{1}{2}+z)}
\left(\psi(\nu+\tfrac{1}{2}+z)-\psi(z)\right).\nonumber
\end{align}
Using the evaluations \eqref{6.13} and \eqref{3.5} for the digamma and gamma
functions, respectively, one could easily state general identities, analogous
to Corollary~\ref{cor:6.4}. Instead, we just give two examples for $z=1$ and
$\nu=1$, resp.\ $\nu=2$:
\begin{align*}
\sum_{k=0}^\infty\frac{\binom{2k}{k}}{4^k(k+1)^2(2k-1)}
&=\frac{4\log{2}}{3}-\frac{4}{9},\\
\sum_{k=0}^\infty\frac{\binom{2k}{k}}{4^k(k+1)^2(2k-1)(2k-3)}
&=\frac{92}{225}-\frac{4\log{2}}{15}.
\end{align*}

{\bf 5.} As a final application of this method, we set $x=-\frac{1}{2}$ in
\eqref{6.11} and note that 
\[
\binom{k+\frac{1}{2}}{k} = (2k+1)\binom{k-\frac{1}{2}}{k}
=\frac{2k+1}{4^k}\binom{2k}{k}.
\]
Then upon differentiation, as before, we get
\[
\sum_{k=0}^\infty\frac{\binom{2k}{k}(2k+1)}{4^k(k+z)^2}
=\frac{\Gamma(-\frac{1}{2})\Gamma(z)}{\Gamma(z-\frac{1}{2})}
\left(\psi(z-\tfrac{1}{2})-\psi(z)\right).
\]
For $z=1$, we use \eqref{3.5} and the fact that 
$\Gamma(-\frac{1}{2})=-2\Gamma(\frac{1}{2})$ to get as a specific example,
\[
\sum_{k=0}^\infty\frac{\binom{2k}{k}(2k+1)}{4^k(2k+2)^2} = \log{2}.
\]
It will be clear that numerous other identities and classes of identities can
be obtained in a similar way.

\end{document}